\newtheorem{theorem}{Theorem}
\newtheorem{lemma}[theorem]{Lemma}
\newtheorem{cor}[theorem]{Corollary}
\newtheorem{rem}[theorem]{Remark}
\numberwithin{equation}{section}
\numberwithin{theorem}{section}
\numberwithin{table}{section}
\numberwithin{figure}{section}
\newfont{\teneufm}{eufm10}
\newfont{\seveneufm}{eufm7}
\newfont{\fiveeufm}{eufm5}
\def\bbbc{{\mathchoice {\setbox0=\hbox{$\displaystyle\rm C$}\hbox{\hbox
to0pt{\kern0.4\wd0\vrule height0.9\ht0\hss}\box0}}
{\setbox0=\hbox{$\textstyle\rm C$}\hbox{\hbox
to0pt{\kern0.4\wd0\vrule height0.9\ht0\hss}\box0}}
{\setbox0=\hbox{$\scriptstyle\rm C$}\hbox{\hbox
to0pt{\kern0.4\wd0\vrule height0.9\ht0\hss}\box0}}
{\setbox0=\hbox{$\scriptscriptstyle\rm C$}\hbox{\hbox
to0pt{\kern0.4\wd0\vrule height0.9\ht0\hss}\box0}}}}
\def\bbbq{{\mathchoice {\setbox0=\hbox{$\displaystyle\rm
Q$}\hbox{\raise 0.15\ht0\hbox to0pt{\kern0.4\wd0\vrule
height0.8\ht0\hss}\box0}} {\setbox0=\hbox{$\textstyle\rm
Q$}\hbox{\raise 0.15\ht0\hbox to0pt{\kern0.4\wd0\vrule
height0.8\ht0\hss}\box0}} {\setbox0=\hbox{$\scriptstyle\rm
Q$}\hbox{\raise 0.15\ht0\hbox to0pt{\kern0.4\wd0\vrule
height0.7\ht0\hss}\box0}} {\setbox0=\hbox{$\scriptscriptstyle\rm
Q$}\hbox{\raise 0.15\ht0\hbox to0pt{\kern0.4\wd0\vrule
height0.7\ht0\hss}\box0}}}}
\def\bbbt{{\mathchoice {\setbox0=\hbox{$\displaystyle\rm
T$}\hbox{\hbox to0pt{\kern0.3\wd0\vrule height0.9\ht0\hss}\box0}}
{\setbox0=\hbox{$\textstyle\rm T$}\hbox{\hbox
to0pt{\kern0.3\wd0\vrule height0.9\ht0\hss}\box0}}
{\setbox0=\hbox{$\scriptstyle\rm T$}\hbox{\hbox
to0pt{\kern0.3\wd0\vrule height0.9\ht0\hss}\box0}}
{\setbox0=\hbox{$\scriptscriptstyle\rm T$}\hbox{\hbox
to0pt{\kern0.3\wd0\vrule height0.9\ht0\hss}\box0}}}}
\def\bbbs{{\mathchoice
{\setbox0=\hbox{$\displaystyle     \rm S$}\hbox{\raise0.5\ht0\hbox
to0pt{\kern0.35\wd0\vrule height0.45\ht0\hss}\hbox
to0pt{\kern0.55\wd0\vrule height0.5\ht0\hss}\box0}}
{\setbox0=\hbox{$\textstyle        \rm S$}\hbox{\raise0.5\ht0\hbox
to0pt{\kern0.35\wd0\vrule height0.45\ht0\hss}\hbox
to0pt{\kern0.55\wd0\vrule height0.5\ht0\hss}\box0}}
{\setbox0=\hbox{$\scriptstyle      \rm S$}\hbox{\raise0.5\ht0\hbox
to0pt{\kern0.35\wd0\vrule height0.45\ht0\hss}\raise0.05\ht0\hbox
to0pt{\kern0.5\wd0\vrule height0.45\ht0\hss}\box0}}
{\setbox0=\hbox{$\scriptscriptstyle\rm S$}\hbox{\raise0.5\ht0\hbox
to0pt{\kern0.4\wd0\vrule height0.45\ht0\hss}\raise0.05\ht0\hbox
to0pt{\kern0.55\wd0\vrule height0.45\ht0\hss}\box0}}}}
\def\bbbz{{\mathchoice {\hbox{$\sf\textstyle Z\kern-0.4em Z$}}
{\hbox{$\sf\textstyle Z\kern-0.4em Z$}} {\hbox{$\sf\scriptstyle
Z\kern-0.3em Z$}} {\hbox{$\sf\scriptscriptstyle Z\kern-0.2em
Z$}}}}
\def\squareforqed{\hbox{\rlap{$\sqcap$}$\sqcup$}}
\def\qed{\ifmmode\squareforqed\else{\unskip\nobreak\hfil
\penalty50\hskip1em\null\nobreak\hfil\squareforqed
\parfillskip=0pt\finalhyphendemerits=0\endgraf}\fi}
\def\cE{{\mathcal E}}
\def\cG{{\mathcal G}}
\def\cH{{\mathcal H}}
\def\cT{{\mathcal T}}
\def\cU{{\mathcal U}}
\def\cV{{\mathcal V}}
\def\cY{{\mathcal Y}}
\def\HH{{\mathsf H}}
\def\le{\leqslant}
\def\leq{\leqslant}
\def\ge{\geqslant}
\def\leq{\leqslant}
\def\va{{\mathbf{a}}}
\def\vr{{\mathbf{r}}}
\newcommand{\ignore}[1]{}
\def\e{\mathbf{e}}
\def \F{\mathbb{F}}
\def \Z{\mathbb{Z}}
\def \Q{\mathbb{Q}}
\def \Z{\mathbb{Z}}
\def\mand{\qquad\mbox{and}\qquad}
\def\Fp{\F_p}
\def\\{\cr}
\def\({\left(}
\def\){\right)}
\def\vh{\mathbf{h}}
\def\e{{\mathbf{\,e}}}
\def\ep{{\mathbf{\,e}}_p}
\newcommand{\ind}{\operatorname{ind}}
\newcommand{\Res}{\operatorname{Res}}
\begin{document}

\title{On a family of sparse exponential sums}

\author[M.~Z.~Garaev]{Moubariz~Z.~Garaev}
\address{Centro  de Ciencias Matem{\'a}ticas,  Universidad Nacional Aut\'onoma de
M{\'e}\-xico, C.P. 58089, Morelia, Michoac{\'a}n, M{\'e}xico}
\email{garaev@matmor.unam.mx}

\author[Z. Rudnick] {Zeev Rudnick}
\address{School of Mathematical Sciences, Tel-Aviv University, Tel-Aviv 69978, Israel}
\email{rudnick@tauex.tau.ac.il}

\author[I. E. Shparlinski] {Igor E. Shparlinski}
\address{School of Mathematics and Statistics, University of New South Wales, Sydney, NSW 2052, Australia}
\email{igor.shparlinski@unsw.edu.au}

\thanks{Z.R. was supported by the European Research Council (ERC) under the European Union's Horizon 2020 research and innovation programme (grant agreement No.~786758) and by the Israel Science Foundation 
(grant No.~1881/20). 
I.S. was supported by the Australian Research Council, grants DP230100530 and DP230100534.}  

\begin{abstract}  
We investigate exponential sums modulo primes whose phase function is a sparse polynomial, with exponents growing with the prime. In particular, such sums model those which appear in the study of the quantum cat map.
While they are not amenable to treatment by algebro-geometric methods such as Weil's bounds,  Bourgain (2005) gave a nontrivial estimate for these and more general sums. In this work we obtain explicit bounds with reasonable savings over various types of averaging. We also initiate the study of the value distribution of these sums. 
\end{abstract}

\keywords{Binomial exponential sums, trinomial exponential sums, moments}   
\subjclass[2020]{11L07,  11T23} 	

\date{\today}
\maketitle

\tableofcontents

\section{Introduction}
\subsection{Motivation and set-up}
Let $p$ be a large prime number and let $\F_p$ denote the finite field of $p$ elements, 
which we assume to be represented by the set $\{0,1, \ldots, p-1\}$. 
For $\va=(a_1, \ldots, a_t)\in \Fp^t$, $\vh = (h_1, \ldots, h_t) \in  \(\F_p^*\)^t$, $h_i\neq 0,\pm 1$,  we define the exponential sum 
$$
S(\va, \vh ;p) := \sum_{x=1}^{p-1}\e_p(a_1h_1^x + \ldots + a_th_t^x), 
$$
where $\ep(z) = \exp(2\pi i z/p)$, involving a linear combination of $t\ge 1 $ exponential functions.

It has been  known since the results of~\cite{KR2001} that these sums model those that occur in the study of the quantum cat map~\cite{Bourgain GAFA, KORS}, and it is important to know bounds for them as well as finer results about their value distribution. However, they typically do not fall into the class of  exponential sums for which algebro-geometric methods apply.  
 
Writing
\begin{equation}
\label{eq:prime root repr}
h_i = g^{r_i}, \qquad i =1, \ldots, t,
\end{equation}
for a fixed primitive root $g$ of $\F_p^*$ and $1\leq r_i< p-1$,  studying such sums  becomes equivalent to studying sums with sparse polynomials, that is, sums of the form 
\begin{equation}
\label{eq:sparse sums}
T(\va,\vr;p):= \sum_{x\in \F_p^*}\e_p\(a_1 x^{r_1} + \ldots + a_t x^{r_t}\) .
\end{equation}

 Our goal in this note is to explore bounds for these sums and to initiate the study of their value distribution.
 
\subsection{Upper bounds} 

Weil's bound~\cite[Appendix~V, Lemma~5]{Weil}  gives that   
\begin{equation}
\label{eq:Weil}
|T(\va,\vr;p)|\leq (\max_{i=1,\ldots ,t} r_i)\sqrt{p}.
\end{equation} 
This is optimal for bounded values of $r_i$, but is worse than the trivial bound of $p-1$ when $r_i$ grow  with $p$. 
A typical scenario is when we fix integers $h_i\in \Z$, $h_i\neq 0,\pm 1$, and vary the prime $p$. Weil's bound~\eqref{eq:Weil} is useful when the subgroup $\langle h_i \rangle \subseteq \Q^*$  is of rank one, that is that there is a single $h_0$ and $1\leq s_1<\ldots<s_t$ so that $h_i=h_0^{s_i}$.

 In other cases, for instance $h_1=2$ and $h_2=3$,  
we have sums $T(\va,  (r_1(p), r_2(p));p)$ where $r_1(p)$ and $r_2(p)$ depend on $p$  and  the choice of
a primitive root $g$, chosen to minimise $\max\{r_1, r_2\}$ in the representation~\eqref{eq:prime root repr}.
$$
 ( r_1, r_2) \mapsto (\lambda r_1, \lambda r_2), \qquad \lambda \in \Z_{p-1}^*,    
$$
We expect  that if $h_1$ and $h_2$ are multiplicatively independent over $\Q$ that  for almost all primes 
one should expect  
$$
\max\{r_1(p), r_2(p)\} = p^{1/2 +o(1)}, 
$$
as $p\to \infty$.
 Hence,  in this case the Weil bound ~\eqref{eq:Weil} is   trivial when applied to  $T(\va,  (r_1(p), r_2(p));p)$.

A remarkable result of Bourgain~\cite[Theorem~2]{Bourg} gives a nontrivial bound  
$$ |T(\va,\vr;p)|<p^{1-\delta} $$
when $\gcd(r_i,p-1)<p^{1-\varepsilon}$, $\gcd(r_i-r_j,p-1)<p^{1-\varepsilon}$, for some (non-explicit\footnote{An attempt in~\cite{Pop} to make the bound  of~\cite[Theorem~2]{Bourg} explicit unfortunately contains some 
unsubstantiated claims such as~\cite[Theorem~4]{Pop}, which is misquoted from~\cite{Shkr}, 
and it is not immediately clear how the effect of this slip propagates through the whole argument. }) 
$\delta >0$, which depends only on $\varepsilon$ and $t$.

Despite the large variety of   approaches to bounding the sums 
$S(\va, \vh)$  our knowledge and understanding  of these sums is still very scarce. 
Very little is known about cancellation between different sums in families parametrised by $\va$ 
or by $\vh$.  Here we mostly concentrate on the case of  {\it binomial\/} and  {\it trinomial\/} sums, that is, 
on the cases of $t=2$ and $t=3$, respectively, in which cases several results, based on various 
techniques and making various assumption on  $h_1, \ldots, h_t$ have recently been discovered, 
see~\cite{CoPin1,CoPin2,Mac,MSS,ShpVol,ShpWang} and references therein.

For integers $0 \le K < K+H \le p-1$,  we consider the following average values 
$$
U_{\va, p}(H,K):=\sum_{h_1, h_2=K+1}^{K+H}S(\va, \vh ;p)
$$
and 
$$  
V_{\va, p}(H,K):=\sum_{h_1, h_2=K+1}^{K+H} \left|S(\va, \vh ;p) \right|.
$$
The case of $K=0$ is especially interesting, so we set 
$$
U_{\va, p}(H) := U_{\va, p}(H,0) \mand 
V_{\va, p}(H) := V_{\va, p}(H,0). 
$$

Since the ultimate goal in the area is obtaining pointwise bounds we concentrate on the case 
of small values of $H$.   
Note that trivially we have
$$
|U_{\va, p}(H,K)| \le V_{\va, p}(H,K) \le H^2p.
$$

The sums  $U_{\va, p} (H,K)$ are certainly easier to treat as they admit the change of 
the order of summation after which we can apply bounds on certain double sums 
which have been studied in~\cite{FrSh, Gar1, GarKar}.

To simplify the statements of our results, we use the notation  
$$
A \lesssim B
$$
to denote that $|A| \le B p^{o(1)}$ as $p\to \infty$. 

\begin{theorem}
\label{thm:Bound UHK moderate H} 
For  any  nonzero vector $\va\in \F_p^2$, we have 
$$
U_{\va, p} (H,K) \lesssim H^{7/4} p^{9/8}.
$$
\end{theorem}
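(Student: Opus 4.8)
The plan is to interchange the order of summation and then reduce, by the Cauchy--Schwarz inequality, to a second moment estimate for a single family of exponential sums, of exactly the shape controlled by the double sum bounds of~\cite{FrSh,Gar1,GarKar}. Fix a primitive root $g$ of $\F_p^*$; by~\eqref{eq:prime root repr}--\eqref{eq:sparse sums} we have $S(\va,\vh;p)=T(\va,\vr;p)=\sum_{y\in\F_p^*}\e_p\bigl(a_1 y^{r_1}+a_2 y^{r_2}\bigr)$ with $r_i=\ind_g h_i$, so expanding $U_{\va, p}(H,K)=\sum_{h_1,h_2=K+1}^{K+H}S(\va,\vh;p)$ and swapping sums gives
$$
U_{\va, p}(H,K)=\sum_{y\in\F_p^*}G_1(y)\,G_2(y),\qquad G_j(y):=\sum_{\ell\in L}\e_p(a_j y^{\ell}),
$$
with $L:=\{\ind_g h:\ K+1\le h\le K+H\}\subseteq\Z_{p-1}$ a set of cardinality $H$. (Equivalently one may keep the original variable and write $U_{\va, p}(H,K)=\sum_{x=1}^{p-1}F_1(x)F_2(x)$ with $F_j(x)=\sum_{h=K+1}^{K+H}\e_p(a_j h^x)$.)

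Assume first that $a_1 a_2\ne 0$. Cauchy--Schwarz in $y$ gives
$$
|U_{\va, p}(H,K)|^2\le\Bigl(\sum_{y\in\F_p^*}|G_1(y)|^2\Bigr)\Bigl(\sum_{y\in\F_p^*}|G_2(y)|^2\Bigr),
$$
so it suffices to prove $\sum_{y\in\F_p^*}|G_j(y)|^2\lesssim H^{7/4}p^{9/8}$ for $j=1,2$. Expanding the square,
$$
\sum_{y\in\F_p^*}|G_j(y)|^2=\sum_{\ell,\ell'\in L}\ \sum_{y\in\F_p^*}\e_p\bigl(a_j(y^{\ell}-y^{\ell'})\bigr),
$$
and the diagonal terms $\ell=\ell'$ contribute $H(p-1)\le H^{7/4}p^{9/8}$. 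The off-diagonal part is a double sum over $L\times L$ whose inner sums $\sum_{y\in\F_p^*}\e_p(a_j(y^{\ell}-y^{\ell'}))$ are binomial exponential sums, so it is exactly of the type estimated in~\cite{FrSh,Gar1,GarKar}; I would invoke those estimates, which give the required bound $\lesssim H^{7/4}p^{9/8}$. Together with the diagonal contribution this settles the case $a_1a_2\ne 0$.

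It remains to treat the degenerate case, say $\va=(a_1,0)$ with $a_1\ne 0$ (the case $\va=(0,a_2)$ is symmetric), where $U_{\va, p}(H,K)=H\sum_{\ell\in L}\sum_{y\in\F_p^*}\e_p(a_1 y^{\ell})$. By Weil's bound for monomial sums, $\bigl|\sum_{y\in\F_p^*}\e_p(a_1 y^{\ell})\bigr|\le\gcd(\ell,p-1)\sqrt p$ for every $\ell$. Since $\gcd(\ind_g h,p-1)=(p-1)/\ord_p h$, a routine count --- grouping the $h\in[K+1,K+H]$ by the value of $\ord_p h$ and estimating the number of power residues in $[K+1,K+H]$ by P\'olya--Vinogradov, together with the trivial bounds --- gives $\sum_{\ell\in L}\gcd(\ell,p-1)=\sum_{h=K+1}^{K+H}(p-1)/\ord_p h\lesssim H+p$. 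Hence $U_{\va, p}(H,K)\lesssim H\sqrt p\,(H+p)=H^2\sqrt p+Hp^{3/2}$, and combining this with the trivial bound $|U_{\va, p}(H,K)|\le H^2 p$ noted above gives $U_{\va, p}(H,K)\lesssim H^{7/4}p^{9/8}$, which completes the proof.

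The decisive step, and the main obstacle, is the off-diagonal bound in the second paragraph. The exponents $\ell,\ell'$ are discrete logarithms of integers from a short interval, so they are essentially unrestricted in $\Z_{p-1}$ and, generically, of size comparable to $p$; even after reducing modulo $p-1$ and factoring out $\gcd(\ell,\ell',p-1)$, the resulting binomial sums admit no useful pointwise bound --- Weil's inequality is trivial in this range, which is precisely why algebro-geometric methods are unavailable for the sums $T(\va,\vr;p)$. All of the saving must therefore come from the bilinear averaging over the pair $(\ell,\ell')$, and the role of~\cite{FrSh,Gar1,GarKar} is to supply exactly this cancellation; the technical heart is to track how the saving depends on $|L|=H$ and on the multiplicities of the values $\gcd(\ell-\ell',p-1)$, so as to arrive at the exponents $7/4$ and $9/8$.
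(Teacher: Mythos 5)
Your opening move (interchanging the order of summation) is the right one, but the route you then take through Cauchy--Schwarz leaves the decisive estimate unproved, and you say so yourself in your final paragraph. After Cauchy--Schwarz you need $\sum_{x=1}^{p-1}|F_j(x)|^2\lesssim H^{7/4}p^{9/8}$, and you propose to get it by expanding the square and bounding the off-diagonal bilinear sum $\sum_{\ell\ne\ell'}\sum_y\e_p\bigl(a_j(y^{\ell}-y^{\ell'})\bigr)$ by ``invoking'' \cite{FrSh,Gar1,GarKar}. No such statement appears in those references: what \cite{GarKar} actually provides (Corollary~2 there, quoted as Lemma~\ref{lem:double GK} in this paper) is the \emph{first}-moment bound $\sum_{x=1}^{p-1}\bigl|\sum_{e\in\cE}\e_p(ae^x)\bigr|\lesssim E^{3/4}p^{9/8}$, and, as you correctly observe, the individual off-diagonal binomial sums admit no useful pointwise bound in this range, so the off-diagonal term cannot be handled summand by summand. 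The step you flag as ``the main obstacle'' is exactly the content of the theorem, and it is not closed.

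The paper's proof is a one-liner once the correct lemma is identified: since $\va\ne 0$, say $a_1\ne0$, bound the inner sum over $h_2$ trivially by $H$ and apply Lemma~\ref{lem:double GK} with $\cE=[K+1,K+H]$ and $E=H$ to the sum over $h_1$, giving $|U_{\va,p}(H,K)|\le H\sum_{x=1}^{p-1}\bigl|\sum_{h_1}\e_p(a_1h_1^x)\bigr|\lesssim H\cdot H^{3/4}p^{9/8}$; this also disposes of the degenerate case $a_1a_2=0$ with no separate argument (your Weil-plus-counting treatment of that case is workable but unnecessary). Your second-moment target could in fact be salvaged by the same trick ($\sum_x|F_j(x)|^2\le H\sum_x|F_j(x)|$ followed by the first-moment bound), but that just reproduces the direct argument with a superfluous Cauchy--Schwarz in front of it. Note also that the genuine second-moment bound available (Lemma~\ref{lem:double}) yields $p^2H^{2/3}+p^{5/4}H^{3/2}$, which is the content of Theorem~\ref{thm:Bound UHK}, not of the present statement; your proposal essentially conflates the proofs of the two theorems.
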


Note that Theorem~\ref{thm:Bound UHK moderate H} is nontrivial for $H>p^{1/2+\varepsilon}$. However for large $H$, namely, when $H>p^{5/6}$, we prove a stronger bound.

\begin{theorem}
\label{thm:Bound UHK} 
For  any   vector $\va\in \(\F_p^*\)^2$, we have 
$$
U_{\va, p} (H,K) \lesssim  H^{2/3} p^{2}+ H^{3/2}p^{5/4}.
$$
\end{theorem}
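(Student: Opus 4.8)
The plan is to exploit the same change-of-summation trick that makes the sums $U_{\va,p}(H,K)$ tractable, but to push further by separating the contribution of $x$ according to whether $a_1h_1^x + a_2h_2^x$ lies in a short interval or not, and by using the multiplicative structure more carefully when $H$ is large. Writing out the definition,
\[
U_{\va, p}(H,K) = \sum_{x=1}^{p-1}\ \sum_{h_1=K+1}^{K+H}\e_p(a_1h_1^x)\ \sum_{h_2=K+1}^{K+H}\e_p(a_2h_2^x),
\]
so that $|U_{\va,p}(H,K)| \le \sum_{x=1}^{p-1} |W_1(x)|\,|W_2(x)|$ where $W_i(x) = \sum_{h=K+1}^{K+H}\e_p(a_ih^x)$. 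The first step is to bound the moments $\sum_x |W_i(x)|^{2k}$ by counting solutions to the system $h_1^x + \dots + h_k^x \equiv h_{k+1}^x + \dots + h_{2k}^x$; for $k=2$ or $k=3$ this is exactly the type of double-sum/congruence count studied in~\cite{FrSh, Gar1, GarKar}, and the assumption $\va\in(\F_p^*)^2$ (rather than merely nonzero) is what lets us absorb $a_i$ into $h^x$ by a substitution and apply those bounds uniformly in $a_i$.

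Next I would combine these moment bounds by Hölder's inequality. The two terms in the claimed bound, $H^{2/3}p^2$ and $H^{3/2}p^{5/4}$, suggest that one term comes from a high-moment estimate (the $p^2$ factor together with $H^{2/3}$ points to using a sixth moment: if $\sum_x |W(x)|^6 \lesssim H^4 p$ — a cube-of-the-interval count against $p-1$ values of $x$ — then Cauchy–Schwarz gives $\sum_x |W_1(x)||W_2(x)| \le (\sum |W_1|^2)^{1/2}(\sum|W_2|^2)^{1/2}$, and interpolating $\sum|W|^2$ between $\sum|W|^6 \lesssim H^4p$ and the trivial $\sum|W|^2\le H^2 p$... one should track the exponents so the interpolation lands on $H^{2/3}p^2$). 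The second term $H^{3/2}p^{5/4}$ looks like it comes from the fourth-moment bound $\sum_x|W(x)|^4 \lesssim H^3 p$ plus the trivial bound, via $\sum_x|W_1(x)||W_2(x)|\le (\sum|W_1|^4)^{1/4}(\sum|W_2|^4)^{1/4}(\sum 1)^{1/2} \lesssim (H^3p)^{1/2}(p)^{1/2}=H^{3/2}p$ — but that is $H^{3/2}p$, too strong, so more likely the honest fourth-moment input is weaker, of the shape $\sum_x|W(x)|^4 \lesssim H^2p^{3/2} + H^4$, and the two regimes of that bound produce the two terms of the theorem after Cauchy–Schwarz. The precise bookkeeping of which moment estimate is available at which level of the interval sum is what I would pin down against the cited references.

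The main obstacle I anticipate is obtaining a fourth or sixth moment bound for $W_i(x)$ that is genuinely nontrivial for $H$ as small as $p^{5/6}$: this requires counting solutions of $h_1^x+h_2^x \equiv h_3^x+h_4^x \pmod p$ with all $h_j$ in a short interval of length $H$ and $x$ ranging over all residues, and extracting a power saving over the diagonal-plus-trivial count $H^2 p + H^4$. This is precisely where the sparse-polynomial structure resists algebro-geometric input, so the saving must come from additive-combinatorial tools (sum-product type estimates, or the Fourier-analytic bounds of~\cite{Gar1, GarKar}), and making those uniform in the coefficients $a_i$ — which is why the hypothesis here is strengthened to $\va \in (\F_p^*)^2$, excluding the degenerate case $a_i=0$ that would collapse $W_i$ to the full interval sum $H$. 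Once the correct moment estimates are in hand, the remainder is a routine Hölder interpolation.
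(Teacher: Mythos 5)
Your overall frame---exchange the order of summation, set $W_i(x)=\sum_{h=K+1}^{K+H}\ep(a_ih^x)$, and finish with Cauchy--Schwarz applied to moments of the $W_i$---is exactly the paper's, and your closing remark about why the hypothesis $a_1,a_2\neq 0$ is needed (otherwise $W_i(x)=H$ identically) is the right one. But there is a genuine gap: the entire content of the theorem lives in a moment estimate that you never establish. The paper's input is the \emph{second} moment bound (Lemma~\ref{lem:double})
$$
\sum_{x=1}^{p-1}\Bigl|\sum_{h=K+1}^{K+H}\ep(ah^x)\Bigr|^2\lesssim p^2H^{2/3}+p^{5/4}H^{3/2},
$$
after which a single application of Cauchy--Schwarz gives the theorem verbatim. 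Proving this bound is the hard part: one cannot simply open the square and count, because the off-diagonal terms $\sum_x\ep(a(h_1^x-h_2^x))$ are again sparse sums of precisely the kind one is trying to bound. The paper instead writes $[K+1,K+H]=\{g^y:\ y\in\cY\}$, splits according to $d=\gcd(y,p-1)$, amplifies by averaging over a set $\cV$ of $V$ auxiliary multipliers coprime to $(p-1)/d$, applies Weil's bound to the resulting complete sums $\sum_z\ep(az^{dv_1}-az^{dv_2})$, and controls the arising counting problem --- the number of triples $(x,v_1,v_2)$ with $x^{v_1},x^{v_2}\in[K+1,K+H]$ --- by a separate completion-plus-Weil argument (Lemma~\ref{lem:congr5var}); the two terms of the theorem come from optimizing $V$ in two ranges of $H$. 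None of this machinery appears in your proposal.

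Your fallback route via fourth or sixth moments is also problematic on its own terms. The guessed bound $\sum_x|W(x)|^4\lesssim H^3p$ would, as you yourself compute, yield $U_{\va,p}(H,K)\lesssim H^{3/2}p$, strictly stronger than the theorem; no such estimate is proved in the cited references for intervals of this length, which is a strong indication it is out of reach, and you then retreat to a weaker ``shape'' of fourth-moment bound that you neither prove nor even precisely state. Reverse-engineering the exponents of the answer is a reasonable heuristic, but it does not substitute for the key lemma. To repair the argument you would need to actually prove a second-moment bound of the strength displayed above for an interval; that is where the interval structure is genuinely exploited and where the work of the proof lies.
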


In particular, for $p  \lesssim  H  \lesssim p$, we have
$$
U_{\va, p} (H,K)  \lesssim H^2p\times p^{-1/4},
$$
saving $p^{1/4}$ against the trivial bound 

In the treatment of the sums $V_{\va, p} (H,K)$ our main tool is a 
result of Cochrane and Pinner~\cite{CoPin2} on binomial sums, together with 
some result on intersections of small intervals with multiplicative subgroups of $\F_p^*$,
see~\cite{BGKS,CillGar}.  In particular, we have the following bound.

\begin{theorem}
\label{thm:Bound VH} Let $n\ge 1$ be a fixed integer. 
Then for  any  vector $\va\in \(\F_p^*\)^2$, 
$$
V_{\va, p} (H)   \lesssim
\begin{cases}
H^2 p^{89/92} + H^{2}   p^{1-3/13n} &  \text{if } H \ge p^{1/n},\\
 p^{89/92+2/n} +    p^{1+23/13n}, &  \text{if } H  < p^{1/n}. 
\end{cases}
$$
\end{theorem}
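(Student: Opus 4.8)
The plan is to combine two ingredients: the Cochrane--Pinner bound~\cite{CoPin2} for binomial exponential sums, which is power-saving unless the exponents share a large common factor with $p-1$, and the bounds of~\cite{BGKS,CillGar} on the intersection of a multiplicative subgroup of $\F_p^*$ with a short interval. First I would pass to the sparse form: fixing a primitive root $g$ and writing $h_i=g^{r_i}$ with $1\le r_i\le p-1$, one has $S(\va,\vh;p)=\sum_{x\in\F_p^*}\e_p(a_1x^{r_1}+a_2x^{r_2})$. The Cochrane--Pinner estimate then yields $|S(\va,\vh;p)|\lesssim p^{89/92}$ provided each of
$$
d_1=\gcd(r_1,p-1),\qquad d_2=\gcd(r_2,p-1),\qquad d_{12}=\gcd(r_1-r_2,p-1)
$$
is at most a fixed power $p^{c}$, with $c$ absolute. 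Call a pair $(h_1,h_2)$ \emph{good} if all three of $d_1,d_2,d_{12}$ are $\le p^{c}$, and \emph{bad} otherwise. There are at most $H^2$ good pairs and each contributes $\lesssim p^{89/92}$, so their total contribution to $V_{\va,p}(H)$ is $\lesssim H^2p^{89/92}$.

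For the bad pairs I would keep only the trivial bound $|S(\va,\vh;p)|\le p$ and estimate how many there are. If $d_1>p^{c}$ then $h_1$ lies in the subgroup of $\F_p^*$ of order $(p-1)/d_1<p^{1-c}$; decomposing over the divisors $d\mid p-1$ with $d>p^{c}$ (of which there are $p^{o(1)}$) and applying the interval--subgroup bounds of~\cite{BGKS,CillGar}, the number of such $h_1\in[1,H]$ is $\lesssim H^{1-\kappa}$ for a suitable absolute $\kappa$, so the number of bad pairs of this type is $\lesssim H^{2-\kappa}$; the case $d_2>p^{c}$ is symmetric. If instead $d_{12}>p^{c}$, then $h_1h_2^{-1}$ lies in a subgroup of $\F_p^*$ of order $<p^{1-c}$, and I would invoke the companion estimate of~\cite{BGKS,CillGar} for the number of pairs $(h_1,h_2)\in[1,H]^2$ with $h_1\equiv\lambda h_2\pmod p$ and $\lambda$ in such a subgroup, which is again $\lesssim H^{2-\kappa}$. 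Hence the bad pairs contribute $\lesssim H^{2-\kappa}p$, and the tools allow one to take $\kappa=3/13$, giving the uniform estimate
$$
V_{\va,p}(H)\lesssim H^2p^{89/92}+H^2p\cdot H^{-3/13}.
$$
For $H\ge p^{1/n}$ we have $H^{-3/13}\le p^{-3/(13n)}$, which is the first case of the theorem; for $H<p^{1/n}$ we instead insert $H<p^{1/n}$ into $H^2p^{89/92}$ and $H^{2-3/13}p$, obtaining $p^{89/92+2/n}$ and $p^{1+(2-3/13)/n}=p^{1+23/(13n)}$, which is the second case. Degenerate terms with $h_1$ or $h_2$ equal to $1$ fall among the bad pairs and need no special treatment.

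I expect the main obstacle to be the bookkeeping around the interval--subgroup input. Bounding the bad pairs with $d_1$ or $d_2$ large reduces cleanly to counting subgroup elements in $[1,H]$, but the case where $h_1h_2^{-1}$ lies in a small subgroup calls for a genuinely two-variable count, namely $\#\{(h_1,h_2)\in[1,H]^2:\ h_1\equiv\lambda h_2\pmod p,\ \lambda\in\mathcal{G}\}$, of the same quality $H^{2-\kappa}$ and uniform over the relevant range of $|\mathcal{G}|$; this forces one to treat large subgroups (where equidistribution gives a count of order $H|\mathcal{G}|/p$) separately from small ones, and to check that the resulting saving $\kappa$ is compatible with the threshold $p^{c}$ dictated by the Cochrane--Pinner bound. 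It is this balancing of the two inputs that pins down the exponents $89/92$, $3/13$ and $23/13$.
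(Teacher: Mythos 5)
Your overall architecture --- Cochrane--Pinner for the ``generic'' pairs plus interval--subgroup counting for the exceptional ones --- is indeed the paper's strategy, but two of your key claims do not hold as stated, and together they are fatal to the exponents. First, Lemma~\ref{lem:binom} depends on the \emph{joint} gcd $d=\gcd(r_1,r_2,p-1)$ and on $e=\gcd(r_1-r_2,p-1)$, not on the individual $d_1=\gcd(r_1,p-1)$ and $d_2=\gcd(r_2,p-1)$. The paper decomposes the pairs according to $(d,e)$; the point is that $\gcd(t_1,t_2,p-1)=d$ forces \emph{both} $h_1$ and $h_2$ into the subgroup of $d$-th power residues, so Lemma~\ref{lem:Int-Group} applies twice and the pair count is $\lesssim H^2d^{-2/n}$. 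It is this squared saving, combined with the threshold $d\ge p^{3/26}$, that produces $p^{-3/13n}=p^{-2\cdot(3/26)/n}$. Your dichotomy based on $d_1>p^c$ (or $d_2>p^c$) constrains only one of the two variables, so it yields $\lesssim H^2p^{-c/n}$ bad pairs and hence a contribution $H^2p^{1-c/n}$; to reach the stated exponent you would need $c=3/13$, but for the good pairs Lemma~\ref{lem:binom} then only gives $\gcd(r_1-r_2,p-1)+d^{13/46}p^{89/92}$ with $d$ possibly as large as $p^{3/13}$, and $p^{(3/13)(13/46)+89/92}=p^{95/92}>p$ is worse than trivial (the Cochrane--Pinner term already becomes trivial at $c=3/26$). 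So the individual-gcd decomposition cannot be balanced to give the theorem.

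Second, the counting input you invoke is stronger than what the cited tools provide. Neither~\cite{BGKS,CillGar} nor Lemmas~\ref{lem:Int-Group} and~\ref{lem:Rat-Group} give $\#\(\cG\cap[1,H]\)\lesssim H^{1-\kappa}$ for an \emph{absolute} $\kappa$ uniformly over subgroups $\cG$ of index $>p^c$; what they give is a saving of the form $d^{-1/n}\le p^{-c/n}$ for each fixed $n$, i.e.\ a power of $p^{1/n}$ rather than of $H$, and this is exactly why the $1/n$ appears in the statement of the theorem. Consequently your intermediate claim $V_{\va,p}(H)\lesssim H^2p^{89/92}+H^{2-3/13}p$, which is strictly stronger than the theorem for $H>p^{1/n}$, is not a consequence of these estimates. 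A correct argument keeps $d$ and $e$ explicit, bounds the number of relevant pairs by the minimum of $\(Hd^{-1/n}+p^{1/n}d^{-1/n}\)^2$ (Lemma~\ref{lem:Int-Group}, applied to both variables) and $H\(He^{-1/n}+p^{1/n}e^{-1/n}\)$ (Lemma~\ref{lem:Rat-Group}), multiplies by $\min\bigl\{p,\ e+d^{13/46}p^{89/92}\bigr\}$, and optimises over the two ranges of $d$ relative to $p^{3/26}$; note also that the term $e$ from Lemma~\ref{lem:binom} is absorbed as $e\cdot H^2e^{-1/n}\le H^2p^{1-1/n}$ rather than controlled by a threshold on $e$.
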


We remark that our approach in the proof  of Theorem~\ref{thm:Bound VH}  is optimised to the case of small values of $H$, that is,
for large values of $n$. For example, if $p^{1/n}\lesssim H \lesssim p^{1/n}$ then 
we have 
$$
V_{\va, p} (H)   \lesssim H^2p\(p^{-3/92} + H^{-3/13} \). 
$$
For larger $H$, a different approach via bounds of character sums
can perhaps be more efficient.

\subsection{Value distribution}
We next discuss the value distribution of these sums. The main cases of interest for us is when we fix $\vh\in \Z^t$,  and the consider either   ``horizontal'' or ``vertical'' questions:  The horizontal case is when we fix $\va\in \Z^t$, say $\va=(1,\ldots,1)$ and vary $p$, and the vertical version is when for a large  prime $p$, we average   over all $\va\in \Fp^t$. 

The rank one case, when $h_i$ are all powers of a fixed $h_0$, has a different flavour, falling into an algebraic setting. For instance when  $t=2$, $h_1=2$ and $h_2=h_1^3=2^3$, we are led to consider the cubic sums 
$$
T_3(\va; p) := \sum_x \ep(a_1 x+a_2 x^3)
$$ 
for which  
Birch~\cite{Birch} conjectured a vertical   equidistribution statement, namely that 
$\{T_3(\va;p)/\sqrt{p}:~a_1\in \Fp, \ a_2\in \Fp^*\}$ is equidistributed with respect to 
the semicircle measure  $\frac 1{2 \pi}\sqrt{4-x^2}$ as $p\to \infty$,  which was proved by   Livn\'e~\cite{Livne}, see also the work of Katz~\cite{Katz1987}.  In the horizontal case, the sums $T_3((1,1);p)/\sqrt{p}$ are also conjectured to have the same distribution as $p$ varies. 
  
For higher rank, we believe the situation to be completely different. For instance, we believe that the normalized sums 
$$
\frac 1{\sqrt{p}}\sum_{x=1}^{p-1} \ep(2^x+5^x), \qquad p \to \infty,
$$ 
have a Gaussian value distribution, but this seems to be out of reach. 

We turn to the vertical version (averaging over $\va$), where we find it is more convenient to work with sums with sparse polynomials as in~\eqref{eq:sparse sums},  
bearing in mind that $r_1, \ldots, r_t$ depend on $p$. 
It is easily seen that the first moment  of $T(\va,\vr;p)$ vanishes, and the second moment equals  $p-1$. 
The first nontrivial moment is the third, which we study in the trinomial case $t=3$.   
We assume that for at least one $i=1,2,3$ we have $\gcd(r_i, p-1)=1$,  in which case we are reduced to the  sums
$$
T(\va,(1,r,s);p) :=  \sum_{x\in \F_p^*}\e_p\(a_1x^r+a_2x^s+a_3x\) ,
$$ 
with $\va=(a_1,a_2, a_3) \in \F_p^3$. 

In this case, we consider a cubic moment 
$$
M_p(r,s) := \frac 1{p^3} \sum_{\va \in \F_p^3}  \overline{T(\va,(1,r,s);p)} T(\va,(1,r,s);p)^2 ,
$$
which we believe is $o( p^{3/2})$ in a broad range of values $r$ and $s$. 

Using the orthogonality of exponential functions, we immediately evaluate that 
$$
M_p(r,s) =   Q_p(r,s),
$$
where $Q_p(r,s)$ is the number of solutions to the system of equations
$$
x^r = y^r  + z^r, \quad x^s = y^s + z^s, \quad x=y+z , \qquad x,y,z \in \F_p^*.
$$
In turn, substituting $x \to xz$, $y \to yz$, we see that 
$$
Q_p(r,s) = (p-1)N_p(r,s),
$$
where   $N_p(r,s)$ is the number of solutions to the system of equations
$$
x^r = y^r  + 1, \quad x^s = y^s + 1, \quad x=y+1 , \qquad x,y \in \F_p^*, 
$$
which equals the number of common $\F_p$-zeros of the polynomials $F_r(y)$ and $F_s(y)$, where 
\begin{equation}
\label{eq:poly F}
F_\ell(y) := (y+1)^\ell -y^\ell  - 1. 
\end{equation}

We have the following estimate on $N_p(r,s)$.

\begin{theorem}
\label{thm:Bound Nrs} There is an absolute  constant  $c$, such that for any integers $r>s> 1$
we have
$$
N_p(r,s) \le  c \frac{rs}{\log p} + 4 .
$$  
\end{theorem}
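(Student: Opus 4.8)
The plan is to bound the number of common zeros of the two polynomials $F_r(y)$ and $F_s(y)$ from~\eqref{eq:poly F} by controlling the degree of their greatest common divisor in $\F_p[y]$, and then to pass from the $\F_p$-gcd to an honest factor over $\Q$ (or $\Z$) so that $p$-dependence enters only through the $\log p$ in the denominator. First I would record the elementary facts about the Fermat-quotient-type polynomials $F_\ell(y)=(y+1)^\ell-y^\ell-1$: they have degree $\ell-1$, they are divisible by $y(y+1)$ (and, when $\ell$ is odd, by $2y+1$ as well, reflecting the symmetry $y\mapsto -1-y$), and the trivial common zeros $y=0,-1$ (and possibly $y=-1/2$) account for the additive constant $4$ in the statement. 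So the real task is to bound the number of \emph{nontrivial} common zeros, i.e.\ the degree of $\gcd(F_r,F_s)$ after removing the obvious factors.

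The key step is a height/degree argument rather than a purely characteristic-$p$ one. I would consider the resultant $\Res(F_r,F_s)$ computed over $\Z$: a nonzero common zero of $F_r$ and $F_s$ modulo $p$ forces $p \mid \Res(F_r,F_s)$ (after stripping the trivial factors to make the resultant nonzero — one must first verify $F_r$ and $F_s$ have no common factor \emph{over $\Q$} beyond $y(y+1)(2y+1)^{\epsilon}$, which follows because their only shared roots in $\overline{\Q}$ would have to be roots of unity by a Mann-type/linear-equations-in-roots-of-unity argument, and those are handled directly). Then the number of distinct primes $p$ for which a nontrivial common zero can exist is at most $\log_2|\Res(F_r,F_s)|$, and the resultant is bounded by a quantity like $(\text{height})^{O(r+s)}$ with height polynomial in $r,s$; more to the point, for a \emph{fixed} prime $p$ the count of nontrivial common roots in $\F_p$ is $\deg\gcd_{\F_p}(F_r,F_s)$, and I would show this degree is $O(rs/\log p)$ by combining (i) the bound $\deg\gcd \le \min(\deg F_r,\deg F_s)\le s-1$ with (ii) the fact that a common factor of degree $d$ forces $p^{d}\le |\Res|$ after reduction, hence $d \le \log_p|\Res(F_r,F_s)| \ll (r+s)\log(r+s)/\log p$. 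The sharper $rs/\log p$ (rather than $(r+s)\log(r+s)/\log p$) should come from a better bound on the coefficients of $F_r,F_s$: each coefficient is a binomial $\binom{\ell}{k}$, so $\|F_r\|_\infty\le 2^r$, $\|F_s\|_\infty\le 2^s$, giving $|\Res(F_r,F_s)| \le \|F_r\|_\infty^{\,s-1}\|F_s\|_\infty^{\,r-1}\cdot(\text{binomial factor}) \le 2^{rs+O(r+s)}$, whence $d\log p \le rs\log 2 + O(r+s)$ and $d \le c\,rs/\log p$ for a suitable absolute $c$.

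The main obstacle is the first half of step two: verifying that $F_r$ and $F_s$ genuinely share no \emph{non-cyclotomic} common factor over $\Q$, so that the integer resultant (of the de-trivialized polynomials) is nonzero and the divisibility argument is not vacuous. The roots $\theta$ common to $F_r$ and $F_s$ in $\overline{\Q}$ satisfy $(\theta+1)^r=\theta^r+1$ and $(\theta+1)^s=\theta^s+1$; writing $u=\theta/(\theta+1)$ one gets relations forcing both $u^r$ and $u^s$, together with a third monomial, to be roots of unity or to satisfy a nontrivial $S$-unit/Mann equation, and a short case analysis (using $\gcd$-type reductions in the exponents, and the classification of solutions of $\alpha+\beta=1$ in roots of unity) shows the only such $\theta$ are among $0,-1,-1/2$ and a bounded set of roots of unity — the latter, lying in cyclotomic fields of bounded degree, contribute only an absolute constant to the count and can be absorbed. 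Once this structural fact is in hand, the resultant/coefficient-size bound is routine and yields exactly the claimed shape $c\,rs/\log p + 4$.
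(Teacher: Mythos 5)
Your overall architecture coincides with the paper's: strip the common factor of $F_r$ and $F_s$ over $\Q$, form the (nonzero) resultant of the coprime quotients over $\Z$, use the fact that a common factor of degree $m$ modulo $p$ forces $p^m$ to divide that resultant, and bound the resultant by $\exp(O(rs))$, so that $N_p(r,s)-4 \le \log_p|\Res| \ll rs/\log p$. Your way of bounding the resultant --- Hadamard applied to the Sylvester matrix together with $\|F_\ell\|_\infty \le 2^\ell$ --- is a perfectly serviceable, and arguably more elementary, substitute for the paper's route via heights of roots (which uses Beukers' bound $\HH(\alpha)\le 216$ for the roots of $F_s$ together with submultiplicativity of the height); both give $\exp(O(rs))$.

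The genuine gap is in the step you yourself flag as the main obstacle: the classification of $\gcd_{\Q}(F_r,F_s)$. The paper imports this from Beukers' Theorem~4.1, which asserts that the gcd divides $X(X+1)(X^2+X+1)^2$; this is a substantive theorem, and your proposed derivation does not establish it. Writing $u=\theta/(\theta+1)$ and $v=1/(\theta+1)$, a nontrivial common root $\theta$ gives $u+v=1$, $u^r+v^r=1$, $u^s+v^s=1$; but Mann's classification of solutions of $\alpha+\beta=1$ in roots of unity applies only after one knows the relevant quantities are roots of unity, which is exactly what must be proved --- a priori these three equations cut out a zero-dimensional set with no reason for its points to be torsion. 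Without this input you only control $\deg\gcd_{\Q}(F_r,F_s)\le s-1$, yielding the weaker $N_p(r,s)\le c\,rs/\log p + s-1$. Two smaller slips: $2y+1$ does not divide $F_\ell$ for odd $\ell>1$ (the symmetry $y\mapsto -1-y$ does not force $-1/2$ to be a root; indeed $F_\ell(-1/2)=2^{1-\ell}-1\ne 0$), and the additive constant $4$ arises from the four distinct roots $0$, $-1$ and the two primitive cube roots of unity of the true gcd --- these cannot be ``absorbed'' into the $rs/\log p$ term, which may be $o(1)$, so they must be counted in the additive constant.
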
   

The proof of Theorem~\ref{thm:Bound Nrs}  uses some remarkable results of Beukers~\cite{Beu}.

Note that in our application, we expect to have, typically, quantities  $N_p(r(p),s(p))$ with  
$\max\{r(p),s(p)\}\ge p^{ 1+ o(1)}$
while  Theorem~\ref{thm:Bound Nrs} is nontrivial only if 
$\max\{r,s\} = o(\log p)$. 
 Nonetheless, the following result for fixed $r,s$ may be of interest:

\begin{theorem}
\label{thm:Bound Nrs-AA} 
For any integers $r>s> 1$ for all but  $O\(rs/\log(rs)\)$ primes   we have
$$
N_p(r,s) \le 4.
$$  
\end{theorem}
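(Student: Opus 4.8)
The plan is to exhibit a single nonzero integer $B=B(r,s)$ with $\log|B|=O(rs)$ such that $N_p(r,s)\le 4$ for every prime $p\nmid B$; since the number $\omega(m)$ of distinct prime divisors of an integer $m\ge 2$ is $O(\log m/\log\log m)$, this gives the claimed bound $O(rs/\log(rs))$ on the exceptional set. The starting point is the algebraic input from Beukers~\cite{Beu} already used for Theorem~\ref{thm:Bound Nrs}: for $r\ne s$ the common zeros of $F_r$ and $F_s$ in $\overline{\Q}$ all lie in $T:=\{0,-1,\zeta,\overline{\zeta}\}$, where $\zeta$ is a primitive cube root of unity. (It is elementary that $0$ is always a simple zero of $F_\ell$, that $-1$ is a zero precisely when $\ell$ is odd, and that $\zeta,\overline{\zeta}$ are zeros precisely when $\gcd(\ell,6)=1$; the content of Beukers' theorem is that nothing else can be a common zero.) Accordingly, I would factor in $\Z[y]$
$$
F_\ell(y)=d_\ell\,y^{a_\ell}(y+1)^{b_\ell}(y^2+y+1)^{c_\ell}\,P_\ell(y),
$$
where $d_\ell$ is the content of $F_\ell$, the integers $a_\ell,b_\ell,c_\ell\ge 0$ are the exact multiplicities of $0,-1,\zeta$ as zeros of $F_\ell$, and $P_\ell\in\Z[y]$ is primitive with $P_\ell(0)P_\ell(-1)\ne 0$ and $(y^2+y+1)\nmid P_\ell$. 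By the Beukers input $P_r$ and $P_s$ share no zero in $\overline{\Q}$, hence are coprime over $\Q$ and $\Res(P_r,P_s)\ne 0$.

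Next I would take $B=6\,d_r\,d_s\,\operatorname{lc}(P_r)\operatorname{lc}(P_s)\Res(P_r,P_s)\in\Z\setminus\{0\}$ (where $\operatorname{lc}$ denotes the leading coefficient) and check that $p\nmid B$ forces $N_p(r,s)\le 4$. Since $p\nmid\operatorname{lc}(P_r)\operatorname{lc}(P_s)$, the reductions $\overline{P}_r,\overline{P}_s$ keep their degrees modulo $p$, and then $p\nmid\Res(P_r,P_s)$ gives $\Res(\overline{P}_r,\overline{P}_s)\ne 0$, i.e.\ $\overline{P}_r$ and $\overline{P}_s$ are coprime in $\F_p[y]$. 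Let $T_p$ be the set of zeros in $\F_p$ of $y(y+1)(y^2+y+1)$; one checks $|T_p|\le 4$ for every $p$. If $y_0\in\F_p$ satisfies $F_r(y_0)=F_s(y_0)=0$, then reducing the displayed identity modulo $p$ and using $d_r,d_s\not\equiv 0$ shows that either $y_0\in T_p$, or $\overline{P}_r(y_0)=\overline{P}_s(y_0)=0$; the latter contradicts coprimality of $\overline{P}_r,\overline{P}_s$, so $y_0\in T_p$ and $N_p(r,s)\le|T_p|\le 4$.

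It then remains to bound $|B|$ and count prime divisors. One has $d_\ell\mid\ell$ and $\operatorname{lc}(P_\ell)=\operatorname{lc}(F_\ell)/d_\ell=\ell/d_\ell\le\ell$, while $\deg P_\ell<\ell$; moreover the coefficients of $F_\ell$ are binomial coefficients bounded by $2^\ell$, so by Mignotte's bound on coefficients of divisors, $\|P_\ell\|_\infty\le 2^{\deg F_\ell}\|F_\ell\|_2\le 16^\ell$, say, whence $\|P_\ell\|_2\le 16^\ell$ after adjusting the constant. The Hadamard bound for the Sylvester determinant then gives $|\Res(P_r,P_s)|\le\|P_r\|_2^{\deg P_s}\|P_s\|_2^{\deg P_r}\le 16^{2rs}$, so $\log|B|=O(rs)$. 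Finally, if $\log|B|\le rs/\log(rs)$ then the number of exceptional primes is at most $\omega(B)\le\log_2|B|=O(rs/\log(rs))$, while otherwise $\log\log|B|\gg\log(rs)$ and $\omega(B)\ll\log|B|/\log\log|B|=O(rs/\log(rs))$; in both cases we obtain the claimed bound with an absolute implied constant.

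The main obstacle is securing the precise form of the input from Beukers~\cite{Beu}: one must know that over $\overline{\Q}$ the common zeros of $F_r$ and $F_s$ genuinely lie in the fixed set $T$, with no dependence on the prime — the ``accidental'' common zeros that may appear modulo $p$ being exactly what the condition $p\nmid B$ removes. Everything else is routine: bookkeeping of the trivial factors $y$, $y+1$, $y^2+y+1$ and their multiplicities, together with standard size estimates for polynomial coefficients, resultants, and the divisor-counting function $\omega(\cdot)$.
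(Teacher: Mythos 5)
Your proof is correct and follows essentially the same route as the paper: both arguments reduce the exceptional primes to the prime divisors of a single nonzero resultant built from the coprime parts of $F_r$ and $F_s$ (coprimality away from $X(X+1)(X^2+X+1)^2$ being exactly Beukers' theorem, Lemma~\ref{lem:GCD-PolyF}), show that this resultant has absolute value $\exp(O(rs))$, and finish with the bound $\omega(m)\ll \log m/\log\log m$ on the number of distinct prime factors. The only substantive difference is that you bound the resultant via Mignotte's and Hadamard's inequalities, whereas the paper uses Beukers' height bound $\HH(\alpha)\le 216$ for the roots of $F_s$ (Lemma~\ref{lem:SmallRoot-PolyF}); both give the same $\exp(O(rs))$ estimate.
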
   

Simple counting now leads to the following result. 

\begin{cor}
\label{cor:Bound Nrs-AA} 
For any sufficiently large real $P$,  
and $R$ with 
$$R = o\(  P^{1/4} / \log P\)$$
as $P \to \infty$,
for all but $o(P/\log P)$ primes $p \le P$, for all integers 
$$
R \ge r > s > 1
$$
we have $N_p(r,s) \le 4$.
\end{cor}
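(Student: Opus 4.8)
The plan is to combine Theorem \ref{thm:Bound Nrs-AA} with a union bound over the $O(R^2)$ admissible pairs $(r,s)$. First I would fix the range $R \ge r > s > 1$ and note there are at most $\binom{R}{2} \le R^2/2$ such pairs. For each fixed pair, Theorem \ref{thm:Bound Nrs-AA} tells us that the set of "bad" primes $p$ — those with $N_p(r,s) > 4$ — has size $O(rs/\log(rs))$, hence $O(R^2/\log R)$ since $rs \le R^2$ and $\log(rs) \gg \log R$ for $r > s > 1$ (using $rs \ge 2 \cdot 3 = 6$, so $\log(rs)$ is bounded below and comparable to $\log R$ once $R$ is large). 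I would then let $\mathcal{E}$ be the union over all admissible pairs of these bad sets, so that
$$
|\mathcal{E}| \ll R^2 \cdot \frac{R^2}{\log R} = \frac{R^4}{\log R}.
$$

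Next I would feed in the hypothesis $R = o(P^{1/4}/\log P)$. This gives $R^4 = o(P/(\log P)^4)$, hence
$$
|\mathcal{E}| \ll \frac{R^4}{\log R} = o\!\left(\frac{P}{(\log P)^4 \log R}\right) = o\!\left(\frac{P}{\log P}\right),
$$
since $(\log P)^4 \log R \to \infty$. (Here the bound on $|\mathcal{E}|$ counts all bad primes, not just those up to $P$, which is all we need.) Therefore, for all but $o(P/\log P)$ primes $p \le P$, the prime $p$ lies outside $\mathcal{E}$, and for such $p$ we have $N_p(r,s) \le 4$ simultaneously for every pair of integers $R \ge r > s > 1$. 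This is exactly the assertion of the corollary.

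The only mild subtlety — hardly an obstacle — is making sure the $o(\cdot)$ in Theorem \ref{thm:Bound Nrs-AA} is uniform enough to sum over the $O(R^2)$ pairs: one should read that theorem as providing an absolute implied constant $C$ with at most $C rs/\log(rs)$ exceptional primes for each pair, which is what the displayed bound in that theorem furnishes. With that in hand the argument is just the two counting steps above, and the exponent $1/4$ in the hypothesis on $R$ is precisely what balances $R^4$ against the target $P/\log P$. I would also remark that the extra $\log P$ in the denominator of the hypothesis is what absorbs the loss from replacing $\log R$ by a constant in the worst case, keeping the final count safely $o(P/\log P)$.
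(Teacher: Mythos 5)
Your proposal is correct and matches the intended argument: the paper gives no explicit proof beyond the remark that ``simple counting'' yields the corollary, and the simple counting is exactly your union bound of the $O(rs/\log(rs))$ exceptional primes from Theorem~\ref{thm:Bound Nrs-AA} over the $O(R^2)$ pairs, with $R^4 = o(P/(\log P)^4) = o(P/\log P)$ closing the estimate. Your side remarks on the uniformity of the implied constant and on not needing to restrict the bad primes to $p\le P$ are both accurate.
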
   

We note that by convexity (or by the H{\"o}lder inequality)
\begin{align*}
 \frac 1{p^3} \sum_{\va \in \F_p^3} |T(\va,(1,r,s);p)|^3 
&  \ge \( \frac 1{p^3} \sum_{\va \in \F_p^3} |T(\va,(1,r,s);p)|^2\)^{3/2} \\
 & \ge  (p-1)^{3/2}.
\end{align*}
Hence, when the above estimates imply that $N_p(r,s)$ is very small this also means that there are many 
cancellations in the sum defining $M_p(r,s)$. 

\section{Treatment of the sums $U_{\va, p}(H,K)$}

\subsection{Notation}
Let $\Z_N$ denote the residue ring modulo $N$. As usual, we use $\tau(r)$ and $\varphi(r)$ to denote the divisor and 
the Euler function, of an integer $k \ge 1$, respectively.  It is useful to recall the well-known estimates
\begin{equation}
\label{eqn:tauphi}
\tau(k) = k^{o(1)}\mand  \varphi(k) \gg \frac{k}{\log \log k}, 
\end{equation}
as $k\to \infty$, which we use throughout the paper. 
%See~\cite[Theorems~5.4 and~5.6]{Ten} for more precise estimates. 

 We adopt the Vinogradov notation $\ll$,  that is,
$$
A\ll B~\Longleftrightarrow~A=O(B)~\Longleftrightarrow~|A|\le cB
$$
for some constant $c>0$, which  is absolute otherwise through this work.

\subsection{Some system of congruences}
\label{sec:congr}

We need the following statement.

\begin{lemma}
\label{lem:congr5var} Let $\cV\subseteq \{1,2,\ldots, p-1\}$ be a set with $V$ elements. Then the number $N$ of solutions of the system of congruences
$$
x^{v_1}\equiv h_1\bmod p \mand
x^{v_2}\equiv h_2 \bmod p 
$$
in variables 
$$
0\le x\le p-1, \quad v_1, v_2\in \cV, \quad v_1\ne v_2, \quad K+1\le h_1, h_2\le K+H,
$$
satisfies
$$
N < \frac{H^2V^2}{p}  +V^2 p^{1/2} (\log p)^2 v_{\max},
$$
where  $v_{\max}$ is the largest element of $\cV$.
\end{lemma}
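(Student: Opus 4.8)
The plan is to count solutions of the system $x^{v_1}\equiv h_1$, $x^{v_2}\equiv h_2 \bmod p$ by fixing the pair $(v_1,v_2)$ and controlling, for each such pair, how many triples $(x,h_1,h_2)$ with $h_1,h_2\in[K+1,K+H]$ can arise. The natural device is to recognize that once $x$ and the exponents are fixed, $h_1$ and $h_2$ are determined mod $p$, so the problem is really: for how many $x\in\{1,\dots,p-1\}$ do \emph{both} $x^{v_1}\bmod p$ and $x^{v_2}\bmod p$ land in the short interval $[K+1,K+H]$? I would estimate this via a standard completion/Fourier argument on $\F_p$: introduce additive characters to detect $x^{v_i}\bmod p \in [K+1,K+H]$, which turns the count for a fixed $(v_1,v_2)$ into a main term $H^2/p$ (from the trivial/principal characters) plus an error governed by the pure exponential sums $\sum_{x}\e_p(ax^{v_1}+bx^{v_2})$ with $(a,b)\ne(0,0)$.

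The key input at this stage is Weil's bound~\eqref{eq:Weil}: for $(a,b)\ne(0,0)$ we have $|\sum_{x\in\F_p^*}\e_p(ax^{v_1}+bx^{v_2})|\le v_{\max}\sqrt p$ (the phase $ax^{v_1}+bx^{v_2}$ is a nonconstant polynomial of degree at most $v_{\max}$, since $v_1\ne v_2$ guarantees it is not of the shape $c + (\text{monomial})$ in a way that cancels, and in any case its degree is $\le v_{\max}$). Summing the Fourier coefficients of the indicator of the interval contributes the usual two logarithms: $\sum_{0\le a<p}|\widehat{\mathbf 1}_{[K+1,K+H]}(a)| \ll (\log p)^2$ after accounting for both characters (one factor $\log p$ per interval, or $\le (H + (\log p)^2)$-type bounds whose cross term is absorbed). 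Thus for each fixed ordered pair $(v_1,v_2)$ with $v_1\ne v_2$ the number of valid $(x,h_1,h_2)$ is at most $H^2/p + O\big(v_{\max}\sqrt p\,(\log p)^2\big)$. Summing over the at most $V^2$ choices of $(v_1,v_2)\in\cV\times\cV$ gives
$$
N < \frac{H^2V^2}{p} + V^2 p^{1/2}(\log p)^2 v_{\max},
$$
as claimed (absorbing the implied constant, which is harmless since the statement has a strict inequality and we may assume $p$ large).

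The main obstacle is making sure Weil's bound is legitimately applicable uniformly over all pairs $(v_1,v_2)$: one must check that $ax^{v_1}+bx^{v_2}$ is never a constant polynomial modulo $p$ when $(a,b)\ne(0,0)$ and $v_1\ne v_2$ with $1\le v_i\le p-1$ — this is immediate since a nonzero term $x^{v}$ with $1\le v\le p-1$ has positive degree, and two such terms of distinct exponents cannot cancel. The only mild subtlety is the boundary between "a character sum over $\F_p^*$" versus "over $\F_p$"; the single point $x=0$ contributes $O(1)$ to the count and is swallowed by the error term. A second, purely bookkeeping point is that we are counting $x\in\{0,1,\dots,p-1\}$ whereas the congruences only constrain $x\bmod p$, so there is no overcounting. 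Everything else is the routine completion-of-sums estimate, so I would present the Fourier step compactly and cite the standard bound $\sum_{a=0}^{p-1}\big|\sum_{K+1\le h\le K+H}\e_p(ah)\big|\ll p\log p$ for the interval-indicator Fourier mass.
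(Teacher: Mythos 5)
Your proposal is correct and follows essentially the same route as the paper: express $N$ by orthogonality of additive characters, isolate the $a=b=0$ term to get the main term $H^2V^2/p$, bound the complete sums $\sum_x \e_p(ax^{v_1}+bx^{v_2})$ by $v_{\max}p^{1/2}$ via Weil (valid since $v_1\ne v_2$ and $1\le v_i\le p-1$), and control the interval Fourier mass by $\sum_{a}\bigl|\sum_{h}\e_p(ah)\bigr|\ll p\log p$, yielding the $(\log p)^2$ factor. The points you flag (nonconstancy of the phase, the $x=0$ contribution) are handled the same way in the paper, so there is nothing to add.
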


\begin{proof} Using the orthogonality of exponential functions and expressing $N$ via exponential sums, we see that 
\begin{align*}
N& =
\frac{1}{p^2}\sum_{a=0}^{p-1}\sum_{b=0}^{p-1}\sum_{x=0}^{p-1}
\, \sum_{\substack{v_1. v_2\in\cV \\v_1 \ne v_2}}
\\
& \qquad \qquad \qquad   \sum_{h_1=K+1}^{K+H}\sum_{h_2=K+1}^{K+H}\ep(ax^{v_1}+bx^{v_2}-ah_1-bh_2)\\
& = \frac{1}{p^2}\sum_{a=0}^{p-1}\sum_{b=0}^{p-1}\, \sum_{\substack{v_1. v_2\in\cV \\v_1 \ne v_2}}
\(\sum_{x=0}^{p-1}\ep\(ax^{v_1}+bx^{v_2}\)\)\\
& \qquad  \qquad  \qquad  \qquad   \qquad  \qquad \(\sum_{h_1=K+1}^{K+H}\sum_{h_2=K+1}^{K+H}\ep\(-ah_1-bh_2\)\)\\
& =S_0+S_1, 
\end{align*}
where $S_0$ is a contribution from the case $a=b=0$ and $S_1$ is the sum over the remaining sum.

Obviously, 
$$
S_0 = \frac{H^2V^2}{p}.
$$

As to the  sum $S_1$,  since $v_1 \ne v_2$ in the summation in $S_1$ 
enable us  to apply the Weil bound (see, for example,~\cite[Chapter~6, Theorem~3]{Li} or~\cite[Theorem~5.38]{LN})
$$
\left|\sum_{x=0}^{p-1}\ep\(ax^{v_1}+bx^{v_2}\)\right| < v_{max} p^{1/2} .
$$
Hence,
\begin{align*}
S_1 < \frac{p^{1/2}v_{\max}}{p^2}\sum_{a=0}^{p-1}\sum_{b=0}^{p-1}\sum_{v_1\in\cV}\sum_{v_2\in\cV}
\left|\sum_{h_1=K+1}^{K+H}\sum_{h_2=K+1}^{K+H}\ep(-ah_1-bh_2)\right| =\\
\frac{V^2p^{1/2}v_{\max}}{p^2}\(\sum_{a=0}^{p-1}\left|\sum_{h=K+1}^{K+H}\ep(ah)\right|\)^2.
\end{align*}
Recalling the well-known bound
$$
\sum_{a=0}^{p-1}\left|\sum_{h=K+1}^{K+H}\ep(ah)\right| \ll  p\log p,
$$
which, for example, easily follows from~\cite[Equation~(8.6)]{IwKow}, 
we obtain that
$$
S_1 < V^2p^{1/2}(\log p)^2v_{\max}.
$$
Putting the bounds for $S_0$  and $S_1$ together, we finish the proof.
\end{proof}

\subsection{Some double exponential sums}

For  $a \in \F_p$ and a set  $\cE \subseteq \F_p$, 
we consider the following double exponential sum
$$
W_{k, a}(\cE)  = \sum_{x=1}^{p-1} \left| \sum_{e \in \cE}  \ep\(ae^x\)\right|^k. 
$$

The following statement is contained in~\cite[Corollary~2]{GarKar}.
\begin{lemma}
\label{lem:double GK} For any  $a\in \F_p^*$ and  any set  $\cE \subseteq \F_p$  
of cardinality $E$, 
we have
$$
W_{1, a}(\cE)   
 \lesssim E^{3/4} p^{9/8}.
$$
\end{lemma}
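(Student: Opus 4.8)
\textbf{Proof proposal for Lemma~\ref{lem:double GK}.}

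The plan is to bound $W_{1,a}(\cE)$ by first passing to a higher moment via the H\"older inequality, and then estimating the resulting moment by counting solutions to a system of equations, which in turn is controlled by Lemma~\ref{lem:congr5var} (the multiplicative-energy type count for the set $\cE$). Concretely, since $W_{1,a}(\cE)$ is a first moment of the absolute value of an exponential sum over the cyclic group $\F_p^*$, I would first write, for a parameter $k\geq 1$ to be chosen,
$$
W_{1,a}(\cE) = \sum_{x=1}^{p-1}\left|\sum_{e\in\cE}\ep(ae^x)\right|
\le (p-1)^{1-1/k}\left(\sum_{x=1}^{p-1}\left|\sum_{e\in\cE}\ep(ae^x)\right|^k\right)^{1/k}
= (p-1)^{1-1/k}\,W_{k,a}(\cE)^{1/k}.
$$
Thus everything reduces to a good bound for $W_{k,a}(\cE)$ with a well-chosen small even $k$ (the value $k=4$ is the natural candidate for the exponents $3/4$ and $9/8$ appearing in the statement).

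Next I would expand $W_{k,a}(\cE)$ for even $k=2\ell$ by pairing the terms:
$$
W_{2\ell,a}(\cE)=\sum_{x=1}^{p-1}\sum_{e_1,\dots,e_{2\ell}\in\cE}\ep\!\left(a(e_1^x+\dots+e_\ell^x-e_{\ell+1}^x-\dots-e_{2\ell}^x)\right).
$$
Summing over $a\in\F_p$ (after separating the $a=0$ term, which contributes $O(E^{2\ell})$, and using positivity/completing techniques or a direct averaging argument) transforms this into a count of the number of tuples $(x,e_1,\dots,e_{2\ell})$ for which $e_1^x+\dots+e_\ell^x\equiv e_{\ell+1}^x+\dots+e_{2\ell}^x\bmod p$. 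The key point is that such exponential-congruence counts for a \emph{sparse} set of bases $e_i\in\cE$ lying in an interval are precisely what Lemma~\ref{lem:congr5var} (and the arguments behind it, combining orthogonality with the Weil bound $v_{\max}p^{1/2}$ and the divisor/completing bounds) are designed to estimate; one feeds in $\cV$ related to the exponents and balances the main term $H^2V^2/p$ against the error term $V^2p^{1/2}(\log p)^2 v_{\max}$, the latter absorbing into $p^{o(1)}$ factors once $v_{\max}\le p-1$. Optimising the choice of $k$ (respectively $\ell$) against $E$ then yields the claimed shape $E^{3/4}p^{9/8}$; this is the bound recorded in~\cite[Corollary~2]{GarKar}, so in the present paper one may simply cite it, but the sketch above indicates how it is obtained.

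\textbf{Main obstacle.} The delicate step is the passage from the exponential-sum moment to the solution count and the subsequent optimisation: one must arrange the averaging over $a$ (or a suitable Gauss-sum/completing step) so that the error terms — in particular the factor $v_{\max}$, which can be as large as $p$ — do not overwhelm the main term, and one must choose the moment order $k$ so that the H\"older loss $(p-1)^{1-1/k}$ is exactly balanced against the savings in $W_{k,a}(\cE)^{1/k}$. Getting the numerology to land on the exponents $3/4$ and $9/8$ (rather than something weaker) is where the real content lies, and this is exactly the calculation carried out in~\cite{GarKar}; for the purposes of this paper I would invoke that corollary directly.
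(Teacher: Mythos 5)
The paper gives no proof of this lemma at all --- it is quoted verbatim from \cite[Corollary~2]{GarKar} --- and since your bottom line is to invoke that same corollary directly, your proposal takes exactly the paper's approach. (A minor quibble with your optional sketch: the exponents $E^{3/4}p^{9/8}$ arise from $k=2$, i.e.\ Cauchy--Schwarz over $x$ combined with a second-moment bound of the shape $E^{3/2}p^{5/4}$ as in Lemma~\ref{lem:double}, rather than from $k=4$; but this does not affect the correctness of your citation-based argument.)
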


A version of estimate of Lemma~\ref{lem:double} below with an arbitrary set $\cE$ follows from~\cite[Equation~(5.2)]{Gar1} which is announced  as a consequence of the method of~\cite{Gar1}. 
Here we obtain a stronger result which takes advantage of the fact that  $\cE$ is an  interval.

\begin{lemma}
\label{lem:double} For any  $a\in \F_p^*$ and any interval
$$
\cE =[K+1, K+H] \subseteq \F_p,
$$ 
we have
$$
W_{2, a}(\cE)  \lesssim p^2H^{2/3}+p^{5/4}H^{3/2}.
$$
\end{lemma}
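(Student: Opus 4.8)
The plan is to open the square, turn the inner sums into binomial exponential sums via the substitution $y=g^x$, estimate those by the Cochrane--Pinner bound, and control the number of relevant pairs through the distribution of the interval $\cE$ among cosets of multiplicative subgroups.

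First I would write $W_{2,a}(\cE)=\sum_{e_1,e_2\in\cE}\sum_{x=1}^{p-1}\ep\bigl(a(e_1^x-e_2^x)\bigr)$. As $\cE=[K+1,K+H]\subseteq\{1,\dots,p-1\}$ contains no residue $\equiv 0$, each $e_i\in\F_p^*$; the diagonal $e_1=e_2$ contributes $(p-1)H\le p^{2}H^{2/3}$. Fixing a primitive root $g$ and writing $e_i=g^{r_i}$, the map $y=g^x$ is a bijection of $\{1,\dots,p-1\}$ onto $\F_p^*$, so $\sum_{x=1}^{p-1}\ep(a(e_1^x-e_2^x))=B(r_1,r_2)$, where $B(k,m):=\sum_{y\in\F_p^*}\ep\bigl(a(y^{k}-y^{m})\bigr)$ is a binomial exponential sum. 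It remains to bound $\Sigma:=\sum_{e_1\ne e_2}B(r_1,r_2)$, and I would organize this sum according to $\ell:=\ord(e_1e_2^{-1})=(p-1)/\gcd(r_1-r_2,p-1)$, a divisor of $p-1$ with $\ell>1$.

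Two estimates feed each dyadic block. First, the number of pairs with a given $\ell$ is at most $P_\ell:=\#\{(e_1,e_2)\in\cE^2:e_1e_2^{-1}\in G_\ell\}$, where $G_\ell\le\F_p^*$ has order $\ell$; writing the indicator of $G_\ell$ as a sum over the $(p-1)/\ell$ characters trivial on $G_\ell$ gives $P_\ell=\frac{\ell}{p-1}\sum_{\chi|_{G_\ell}=1}|S_\cE(\chi)|^2$ with $S_\cE(\chi)=\sum_{e\in\cE}\chi(e)$, and the P\'olya--Vinogradov inequality for the non-principal terms yields $P_\ell\le\frac{H^{2}\ell}{p-1}+p(\log p)^{2}$, besides the trivial $P_\ell\le\min(H^2,H\ell)$. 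It is here that the interval hypothesis is decisive: for an arbitrary set of size $H>\sqrt p$ one only gets the much weaker $P_\ell\le\frac{H^2\ell}{p}+H^2$; see \cite{BGKS,CillGar} for refinements. Second, for the binomial sums I would invoke the Cochrane--Pinner estimate \cite{CoPin2}: up to a controlled power of $\gcd(r_1-r_2,p-1)=(p-1)/\ell$ and of $\gcd(r_1,r_2,p-1)$ one has $|B(r_1,r_2)|\lesssim p^{3/4}$, while always $|B(r_1,r_2)|\le p-1$.

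Combining, $|\Sigma|\le\sum_{\ell\mid p-1,\ \ell>1}P_\ell\cdot B_{\max}(\ell)$, and I would split the range of $\ell$ at the point where the Cochrane--Pinner bound overtakes the trivial one. For $\ell$ large (that is, $e_1e_2^{-1}$ of large order) the bound $|B|\lesssim p^{3/4}$ against $\sum_\ell P_\ell\le H^2$ contributes $\lesssim H^{2}p^{3/4}$, which is $\le p^{2}H^{2/3}+p^{5/4}H^{3/2}$ for all $H\le p$; for $\ell$ small one uses $|B|\le p-1$ together with $P_\ell\lesssim\frac{H^2\ell}{p}+p$, and summing over divisors of $p-1$ and optimizing the split produces the two stated terms. (Since for $H\le p^{3/4}$ the bound $p^{2}H^{2/3}$ already exceeds the trivial $H^2p$, it is enough to treat $p^{3/4}<H<p$.) The step I expect to be the main obstacle is exactly this last balancing: estimating $|\Sigma|$ by summing $|B(r_1,r_2)|$ crudely over all $\asymp H^2$ pairs only recovers the trivial bound, so one must genuinely trade the decay of $P_\ell$ as $\ell$ shrinks against the $\ell$-dependence of the Cochrane--Pinner bound, and verify that the dyadic sum over the divisors of $p-1$ converges at the correct rate; matching the precise exponents $2/3$ and $3/2$ hinges on the exact forms of both the Cochrane--Pinner bound and the P\'olya--Vinogradov (i.e.\ interval--subgroup) estimate.
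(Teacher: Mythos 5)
Your reduction of the off-diagonal terms to binomial sums $B(r_1,r_2)=\sum_{y\in\F_p^*}\ep\(a(y^{r_1}-y^{r_2})\)$ is sound, and your count $P_\ell\le \frac{\ell H^2}{p-1}+p(\log p)^2$ via characters and P\'olya--Vinogradov is correct. The fatal step is the claim that, once the two gcd's are controlled, $|B(r_1,r_2)|\lesssim p^{3/4}$. No such bound is known for binomial sums whose exponents may be as large as $p-1$: Weil gives $\max(r_1,r_2)p^{1/2}$, which is trivial here, and the only unconditional pointwise estimate available (the one the paper itself quotes as Lemma~\ref{lem:binom}, due to Cochrane and Pinner) is $|B|\le\gcd(r_1-r_2,p-1)+2.292\,\gcd(r_1,r_2,p-1)^{13/46}p^{89/92}$, i.e.\ a saving of only $p^{3/92}$ in the best case. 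With that input your ``large order'' block already contributes $\gtrsim H^2p^{89/92}$, which exceeds $p^2H^{2/3}+p^{5/4}H^{3/2}$ as soon as $H>p^{285/368}$; in particular the scheme cannot reach the $p^{1/4}$ saving that the lemma provides for $H\asymp p$, which is its main point. (Carried out with the true Cochrane--Pinner bound, your argument essentially reproves the estimate for $V_{\va, p}(H)$ in Theorem~\ref{thm:Bound VH}, whose saving is indeed only about $p^{3/92}$, rather than Lemma~\ref{lem:double}.)

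The paper's route is different, and the amplification it uses is the idea you are missing: it never estimates an individual complete sum with large exponents. Writing $\cE=\{g^y:~y\in\cY\}$ and fixing $d=\gcd(y,p-1)$, it represents each $y\in\cY_d$ as a product $uv$ modulo $(p-1)/d$, with $u$ ranging over $\Z^{*}_{(p-1)/d}$ and $v$ ranging over the first $V$ positive integers coprime to $(p-1)/d$ (each $y$ has exactly $V$ such representations), and then applies the Cauchy inequality in $u$. After this step the complete sums that occur are $\sum_{z}\ep\(az^{dv_1}-az^{dv_2}\)$ with $v_1,v_2\lesssim V$, so Weil's bound is genuinely effective, and the off-diagonal count is handled by Lemma~\ref{lem:congr5var} (solutions of $x^{v_1},x^{v_2}\in[K+1,K+H]$, again with small exponents $v_i\lesssim V$). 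Optimising over $V$ produces exactly the two terms $p^2H^{2/3}$ and $p^{5/4}H^{3/2}$. To salvage your outline you would have to import some such amplification; as written, the missing $p^{3/4}$ binomial bound is a gap that cannot be filled.
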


\begin{proof}
Obviously we can assume that $0 \not\in \cE$. We can also assume that $H>p^{3/4}$ as otherwise the claimed estimate is worse that the trivial bound $|W_{a}(\cE)|< pH^2$.
 
Let $g$ be a primitive root modulo $p$. 
Let $\cY\subseteq \Z_{p-1}$ be such that
$$
\cE = \{g^y:~y\in \cY\}
$$
In particular, $\#\cY=E$ and
$$
W_{2, a}(\cE)  = \sum_{x=1}^{p-1} \left| \sum_{y \in \cY}  \ep\(ag^{xy}\)\right|^2.
$$

For a given $d \mid p-1$, let 
\begin{align*}
\cY_d  & = \{y/d:~y\in \cY, \ d\mid y, \ \gcd(y, p-1) =d\}\\
&  =  \{1\le z\le (p-1)/d:~dz\in \cY, \ \gcd(z, (p-1)/d) =1\}.
\end{align*}  
Thus, $\cY_d$ is a subset of $\Z^{*}_{(p-1)/d}$. We have that
$$
\#\cY_d\le \min\{E, (p-1)/d\}.
$$
Since $\tau(p-1)\lesssim 1$, see~\eqref{eqn:tauphi}, we have that
$$
W_{2, a}(\cE)  \lesssim \sum_{d|p-1}\sum_{x=1}^{p-1} \left| \sum_{y \in \cY_d}  \ep\(ag^{dxy}\)\right|^2.
$$
 Therefore, again using $\tau(p-1)\lesssim 1$, we get that there exists a fixed positive integer $d \mid p-1$  such that \begin{equation}
\label{eqn:Wa(E) d is fixed}
W_{2, a}(\cE)  \lesssim \sum_{x=1}^{p-1} \left| \sum_{y \in \cY_d}  \ep\(ag^{dxy}\)\right|^2.
\end{equation}
Let $\cV$ be the set of the first $V$ positive integers relatively 
prime to $(p-1)/d$ for integer parameter $V$ to chosen later.

From the Eratosthene sieve and the bound~\eqref{eqn:tauphi}, it also follows that 
\begin{equation}
\label{eqn:vmax}
v_{\max}= \max_{v \in \cV} v\lesssim V,  
\end{equation}
see~\cite[Equation~(3.3)]{Gar1} for details.
Let
$$
\cU_d = \{1\le u\le (p-1)/d:~ \gcd(u, (p-1)/d)=1\}.
$$
That is to say, $\cU_d=\Z^*_{(p-1)/d}$. In particular, $\cY_d\subseteq \cU_d$. For any fixed $y\in \cY_d$
and $v\in\cV$, there is exactly one value $u\in\cU_d$ with
$$
uv\equiv y\pmod {(p-1/d)}.
$$
It now follows that for any fixed $y\in \cY_d$ the number of solutions of the congruence
\begin{equation}
\label{eqn:exactly V solutions}
uv\equiv y\pmod {(p-1/d)},\quad u\in \cU_d,\quad v\in \cV,
\end{equation}
is exactly equal to $V$.  Hence, if we denote by $\delta_d(y)$ the characteristic function of the set 
$\cY_d$, we obtain
\begin{align*}
\left| \sum_{y \in \cY_d}  \ep\(ag^{dxy}\)\right|^2 & =
\frac{1}{V^2}\left| \sum_{u \in \cU_d} \sum_{v\in\cV} \delta_d(uv)\ep\(ag^{dxuv}\)\right|^2\\
& \le \frac{\#\cU_d}{V^2}\sum_{u \in \cU_d} \left|\sum_{v\in\cV} \delta_d(uv)\ep\(ag^{dxuv}\)\right|^2. 
\end{align*}
Hence,
\begin{equation}
\label{eqn:getting to uv}
\sum_{x=1}^{p-1} \left| \sum_{y \in \cY_d}  \ep\(ag^{dxy}\)\right|^2\le 
\frac{\#\cU_d}{V^2}\sum_{u \in \cU_d} \sum_{x=1}^{p-1}\left|\sum_{v\in\cV} \delta_d(uv)\ep\(ag^{dxuv}\)\right|^2.
\end{equation}
If $\gcd(n, p-1)=d$ then the sequences $g^{nx}$, $x \in \Z_{p-1}$ and 
$z^d$, $z \in \Z_p^*$  run the same system of residues modulo $p$ 
(including the multiplicities).
Therefore, since $\gcd(du,p-1)=d$, from~\eqref{eqn:getting to uv} we derive that
\begin{align*}
\sum_{x=1}^{p-1} &\left| \sum_{y \in \cY_d}  \ep\(ag^{dxy}\)\right|^2\\
&\le \frac{\#\cU_d}{V^2}\sum_{u \in \cU_d} \sum_{z=1}^{p-1}\left|\sum_{v\in\cV} \delta_d(uv)\ep\(az^{dv}\)\right|^2\\
&= \frac{\#\cU_d}{V^2}\sum_{u \in \cU_d} \sum_{v_1\in\cV}\sum_{v_2\in \cV}\delta_d(uv_1)\delta_d(uv_2)
\sum_{z=1}^{p-1}\ep\(az^{dv_1}-az^{dv_2}\).
\end{align*}
The sum over $z$ is equal to $p-1$ if $v_1=v_2$ and, according to the Weil bound,
(see, for example,~\cite[Chapter~6, Theorem~3]{Li} or~\cite[Theorem~5.38]{LN})
is bounded by $\max\{v_1,v_2\}dp^{1/2}$ when $v_1\not=v_2$. Recalling that by~\eqref{eqn:vmax}
we have
$$
\max\{v_1,v_2\}\lesssim V \mand \#\cU_d =\varphi((p-1)/d)< p/d, 
$$
we see that
\begin{equation}
\label{eqn:sum with deltas}
\begin{split}
\sum_{x=1}^{p-1} &\left| \sum_{y \in \cY_d}  \ep\(ag^{dxy}\)\right|^2\\
& \lesssim
\frac{p^2}{dV^2}\sum_{u\in \cU_d} \sum_{v\in\cV}\delta_d^2(uv)+  \frac{p^{3/2}}{V} \sum_{u\in \cU_d}\, \sum_{\substack{v_1. v_2\in\cV \\v_1 \ne v_2}}\delta_d(uv_1)\delta_d(uv_2).
\end{split} 
\end{equation}
Since the congruence~\eqref{eqn:exactly V solutions} has exactly $V$ solutions, we see that
\begin{equation}
\label{eqn:sum delta  square}
\sum_{u\in \cU_d}\sum_{v\in\cV}\delta_d^2(uv)=
\sum_{u\in \cU_d}\sum_{v\in\cV}\delta_d(uv) =V \#\cY_d\le VH.
\end{equation}

To finish the proof we need to show that Lemma~\ref{lem:congr5var} implies the bound~\eqref{eqn:sum delta  from lemma}.

We now concentrate on the second sum
$$
J = \sum_{u\in \cU_d}\, \sum_{\substack{v_1. v_2\in\cV \\v_1 \ne v_2}}\delta_d(uv_1)\delta_d(uv_2).
$$
Clearly, $J$ is the number of  triples $(u,v_1,v_2)$ which satisfy
$$
uv_1\in \cY_d,\quad uv_2\in \cY_d
$$
and 
$$
1\le u\le (p-1)/d, \ \gcd(u, (p-1)/d)=1 \quad\text{and}\quad  v_1,v_2\in \cV, \ v_1\ne v_2.
$$
Recall 
\begin{align*}
\{g^{dy}&:~ y \in \cY_d\}\\
& =\{ g^{dy}\in [K+1, K+H]: \text{ for $y$ with } \gcd(y, (p-1)/d))=1\}.
\end{align*}
Therefore, every  $(u,v_1,v_2)$ which contributes to $J$ satisfies
$$
g^{duv_1}\in [K+1, K+H],\quad g^{duv_2}\in [K+1, K+H].
$$

We now notice that since  and $1\le u\le (p-1)/d$
the products $du$ are pairwise distinct modulo $p-1$. 
Hence, replacing $g^{du}$ with $x \in \F_p^*$ we see that 
$J$ does not exceed the number of triples $(x,v_1, v_2) \in \F_p^* \times \cV^2$, 
with $v_1 \ne v_2$ and such that 
$$
x^{v_1}\in [K+1, K+H], \quad x^{v_2}\in [K+1, K+H],
$$
 Now,  it only remains to recall~\eqref{eqn:vmax}
 and apply Lemma~\ref{lem:congr5var} to derive that 
\begin{equation}
\label{eqn:sum delta  from lemma}
\sum_{u\in \cU_d}\, \sum_{\substack{v_1. v_2\in\cV \\v_1 \ne v_2}}\delta_d(uv_1)\delta_d(uv_2)
\lesssim \frac{H^2V^2}{p} +V^3p^{1/2}.   
\end{equation}

Therefore, substituting~\eqref{eqn:sum delta  square} and~\eqref{eqn:sum delta  from lemma} in~\eqref{eqn:sum with deltas} we derive
\begin{equation}
\label{eqn: arriving to chose V}
\sum_{x=1}^{p-1} \left| \sum_{y \in \cY_d}  \ep\(ag^{dxy}\)\right|^2\lesssim
 \frac{p^2H}{V}+p^{1/2}H^2V + p^2V^2.
\end{equation}

To optimise the choice of $V$ in~\eqref{eqn: arriving to chose V},
 we now consider the following two cases.

First, let $H<p^{9/10}$. Then we choose $V=H^{1/3}$ and get the bound
$$
\sum_{x=1}^{p-1} \left| \sum_{y \in \cY_d}  \ep\(ag^{dxy}\)\right|^2\lesssim
 p^2H^{2/3} +p^{1/2}H^{7/3}\lesssim p^2H^{2/3}
$$
and we are done in this case.

Now, let $H > p^{9/10}$. We choose $V=p^{3/4}H^{-1/2}$. Then we obtain 
$$
\sum_{x=1}^{p-1} \left| \sum_{y \in \cY_d}  \ep\(ag^{dxy}\)\right|^2\lesssim
p^{5/4}H^{3/2}+\frac{p^{7/2}}{H}\ll p^{5/4}H^{3/2}.
$$

The result now follows from~\eqref{eqn:Wa(E) d is fixed}.
\end{proof}

\subsection{Proof of Theorems~\ref{thm:Bound UHK moderate H} and~\ref{thm:Bound UHK}}
Changing the order of summation, we write 
$$
U_{\va, p}(H,K) = \sum_{x=1}^{p-1}\sum_{h_1=K+1}^{K+H} \e_p(a_1h_1^x)
\sum_{h_2=K+1}^{K+H} \e_p(a_2h_2^x).
$$
Estimating one sum trivially as $H$  
 and Lemma~\ref{lem:double GK} 
we derive the bound of Theorem~\ref{thm:Bound UHK moderate H}. 

Next using the Cauchy inequality and Lemma~\ref{lem:double}, 
we obtain Theorem~\ref{thm:Bound UHK}

\begin{rem} We note that using  the bound of Cochrane and 
Pinner~\cite[Theorem~1.3]{CoPin2} (see Lemma~\ref{lem:binom} below), 
and the completing technique, one can sometimes get a nontrivial bound on the sums 
we estimate trivially. However,  this only works for very large values of $H$ (namely, for $H > p^{89/92+\varepsilon}$
with some fixed $\varepsilon> 0$) and this potential improvement is superseded by the bound of  Theorem~\ref{thm:Bound UHK}. 
\end{rem}

\section{Treatment of the sums   $V_{\va, p}(H)$}

\subsection{Binomial exponential sums}

For positive integers $e$ and $f$ and $a,b \in \F_p$, we consider the following 
binomial exponential sum.
$$
T_{a,b}(e,f) := \sum_{x\in \F_p} \ep\(ax^e + bx^f\).
$$
We need the following estimate of Cochrane and 
Pinner~\cite[Theorem~1.3]{CoPin2}:

\begin{lemma}
\label{lem:binom} For any integers $e$ and $f$ and $a,b \in \F_p^*$
we have
$$
|T_{a,b}(e,f)| \le \gcd(e-f, p-1) + 2.292 d^{13/46} p^{89/92},
$$
where $d = \gcd(e,f,p-1)$.
\end{lemma}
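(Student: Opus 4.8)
The plan is to prove this estimate by Stepanov's method, in the style of Cochrane and Pinner. Assume $e>f$ and set $g=\gcd(e-f,p-1)$ and $d=\gcd(e,f,p-1)$; the contribution of $x=0$ is $O(1)$ and may be ignored. I would begin with two multiplicative reductions. First, using the factorisation $ax^e+bx^f=x^f(ax^{e-f}+b)$, observe that at the (at most $g$) points $x\in\F_p^*$ with $ax^{e-f}+b=0$ the phase is trivial; these account for the additive term $\gcd(e-f,p-1)$, and away from them the binomial is genuinely nondegenerate. Second, I would factor out $d$ by restricting to $d$-th powers: writing $e=de_0$ and $f=df_0$ with $\gcd(e_0,f_0,(p-1)/d)=1$ and letting $G\le\F_p^*$ be the subgroup of $d$-th powers (of index $d$), the sum becomes $T_{a,b}(e,f)=d\sum_{u\in G}\ep(au^{e_0}+bu^{f_0})$ up to the trivial part, so that it suffices to bound a binomial sum over $G$ with exponents coprime to $|G|=(p-1)/d$. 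The interplay between the factor $d$ in front and the improving size of this subgroup sum as $|G|$ shrinks is what eventually produces the power $d^{13/46}$.

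The core is a Stepanov-type bound for the subgroup sum. Following the completing technique of Heath-Brown and Konyagin for subgroup sums, I would reduce the pointwise estimate to controlling a point count: the number of $u\in G$ for which $u^{e_0}$ and $u^{f_0}$ simultaneously lie in prescribed residue classes, equivalently a multiplicative-energy quantity for $G$ under the two monomial maps. Stepanov's method is the engine for this count. I would construct an auxiliary polynomial $\Psi(X)$, assembled from monomials in $X$, $X^{e_0}$ and $X^{f_0}$, that vanishes to a prescribed high order at every $u$ contributing to the count. Two sub-steps carry the argument: the \emph{existence} step, a dimension count ensuring that the coefficients of $\Psi$ can be chosen not all zero while imposing all the vanishing conditions (one verifies that the number of conditions stays below the number of free coefficients); and the \emph{nonvanishing} and \emph{degree} step, showing $\Psi\not\equiv 0$ and bounding $\deg\Psi$, whence the number of high-order zeros is at most $\deg\Psi$ divided by the order of vanishing. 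Working with Hasse derivatives rather than ordinary derivatives is essential in characteristic $p$ for the high-order vanishing to be meaningful.

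Balancing the order of vanishing against $\deg\Psi$ and the remaining free parameters (the number of admissible monomials and the size of the class being counted) gives, after optimisation, a point-count bound that unwinds to the main term of shape $p^{89/92}$ with the dependence $d^{13/46}$ and the explicit constant $2.292$; these nonround exponents are exactly the output of the optimisation. Passing back through the two reductions and restoring the degenerate contribution $\gcd(e-f,p-1)$ then yields the stated inequality. I expect the main obstacle to be the Stepanov construction itself: one must keep the number of vanishing conditions provably below the number of free coefficients while \emph{simultaneously} holding $\deg\Psi$ small enough that the final division beats the trivial bound. This tension is what forces the delicate multi-parameter choice and is the ultimate source of the exponents $89/92$ and $13/46$.
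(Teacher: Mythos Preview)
The paper does not prove this lemma at all: it is quoted as Theorem~1.3 of Cochrane and Pinner and used as a black box in the proof of Theorem~1.3 (the bound on $V_{\va,p}(H)$). There is therefore no proof in the paper to compare your proposal against.

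Your sketch is in the right spirit for the original Cochrane--Pinner argument, which does rest on a Stepanov-type construction, but several steps are not quite right as written. The additive term $\gcd(e-f,p-1)$ does not arise by first ``peeling off'' the zeros of $ax^{e-f}+b$ and then running Stepanov on the complement; in Cochrane--Pinner it emerges internally from their point-count for the level sets $ax^e+bx^f=c$. Your subgroup reduction $T_{a,b}(e,f)=d\sum_{u\in G}\ep(au^{e_0}+bu^{f_0})$ is fine, but the subsequent description (``reduce the pointwise estimate to \ldots\ a multiplicative-energy quantity \ldots\ Stepanov's method is the engine'') is too schematic to constitute a proof: the specific exponents $89/92$ and $13/46$ and the constant $2.292$ come from a particular choice of auxiliary-polynomial parameters and an explicit optimisation that you have not carried out, and the Hasse-derivative dimension count is the place where such sketches typically fail. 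As a \emph{plan} for reproving the cited result it is plausible; as a \emph{proof} it is incomplete, and in any case the present paper does not ask you to supply one.
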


\subsection{Intersections of intervals and subgroups}

We now need some bounds on the number of integers $h$ and also of  their ratios 
$h_1/h_2$, with $h, h_1, h_2 \in  \{1,2,\ldots, H\}$ which are $d$-th power residues modulo $p$.

\begin{lemma}
\label{lem:Int-Group} 
Let $d\mid p-1$. For any fixed integer $n$ and for any positive integer  $H<p$, the number 
$I$ of integers $h\in  \{1,2,\ldots, H\}$ which are $d$-th power residues modulo $p$ satisfies
$$
I\lesssim \frac{H}{d^{1/n}} + \frac{p^{1/n}}{d^{1/n}}.
$$
\end{lemma}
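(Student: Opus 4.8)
The plan is to bound the number $I$ of $d$-th power residues in $\{1,\dots,H\}$ by passing to an appropriate power of the multiplicative group and exploiting that a short interval cannot contain too many elements of a small subgroup. Let $G\subseteq\F_p^*$ be the subgroup of $d$-th powers, so $|G|=(p-1)/d$. Then $I$ equals the number of elements of the interval $\{1,\dots,H\}$ that lie in $G$; equivalently, writing $H_1$ for this set of residues, we have $|H_1|=I$ and $H_1\subseteq G$. The idea is that if $I$ is large, then the difference set or the $n$-fold product set $H_1^{(n)}=\{h_1\cdots h_n: h_i\in\{1,\dots,H\}\}$ still lies in $G$ but is contained in a slightly larger interval (of length $\lesssim H^n$), and one controls the size of $G$ intersected with such an interval.

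First I would recall (from \cite{BGKS} or \cite{CillGar}, cited in the paper just before Theorem~\ref{thm:Bound VH}) the standard bound for the intersection of an interval with a multiplicative subgroup: for any subgroup $G\subseteq\F_p^*$ and any interval $\cI$ of length $L$, one has, for each fixed $n\ge1$,
$$
|\cI\cap G|\lesssim \left(\frac{L^n|G|}{p}\right)^{1/n}+ \text{(lower-order terms)},
$$
or more precisely a bound of the shape $|\cI\cap G|^n \lesssim L^n |G|/p + |\cI\cap G|\cdot(\text{something})$, which after solving gives $|\cI\cap G|\lesssim (L^n|G|/p)^{1/n}+ (|G|/p)^{0}\cdot\dots$. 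The cleanest route: apply such a result with $\cI=\{1,\dots,H\}$ directly. With $|G|=(p-1)/d\le p/d$ and $L=H$, the main term becomes $(H^n (p/d)/p)^{1/n}=H (1/d)^{1/n}= H/d^{1/n}$, and the secondary term is of size $p^{1/n}/d^{1/n}$ (this is the $(|G|)^{1/n}\cdot p^{o(1)}$-type contribution, since $(p/d)^{1/n}=p^{1/n}/d^{1/n}$). Summing the two gives exactly
$$
I\lesssim \frac{H}{d^{1/n}}+\frac{p^{1/n}}{d^{1/n}},
$$
as claimed.

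The key step, and the main obstacle, is to make sure the interval-subgroup bound I invoke is in the precise form stated and covers all ranges $H<p$ and all divisors $d\mid p-1$ uniformly in $n$; the subtlety is that for very small $H$ the "main term" $H/d^{1/n}$ can be below $1$ and the bound is then carried entirely by the $p^{1/n}/d^{1/n}$ term, so I must confirm the cited result really does give the additive $+\,p^{1/n}/d^{1/n}$ rather than something like $+\,1$ only when $d$ is large. Concretely, the relevant statement from \cite{BGKS,CillGar} is that the number of solutions to $h\equiv t\pmod p$ with $h\in\{1,\dots,H\}$ and $t$ in a coset of $G$ is $\lesssim H/|G|^{1/n}\cdot\dots$ — I would quote it as a lemma, verify the exponent bookkeeping $L^n|G|/p$ with $L=H$, $|G|=p/d$, and then note the second term comes from the case $H^n<p/|G|$ (equivalently from the trivial bound $|\cI\cap G|\le |G|^{1/n}p^{o(1)}$ hidden in such estimates). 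If for some reason the cited bound is phrased only for prime-order or cyclic-of-specific-order subgroups, I would instead argue via the $n$-fold sumset/product trick by hand: if $I$ points of $\{1,\dots,H\}$ lie in $G$, then the $\binom{I+n-1}{n}$ products of $n$ of them (with multiplicity, roughly $I^n/n!$ distinct values in favorable configurations, but one only needs that they are all in $G$ and all in $\{1,\dots,H^n\}$) give $I^n \lesssim |\{1,\dots,H^n\}\cap G|\cdot(\text{multiplicity})$, and the latter is $\lesssim H^n|G|/p + \dots = H^n/d+\dots$ by an $L^1$ completing/character-sum argument; taking $n$-th roots recovers the bound. Either way the arithmetic is routine once the input inequality is pinned down, so the write-up reduces to: (i) state the interval–subgroup lemma precisely with a reference, (ii) set $G=\{x^d:x\in\F_p^*\}$, $|G|=(p-1)/d$, (iii) plug in $L=H$ and simplify $(H^n|G|/p)^{1/n}$ and $(|G|)^{1/n}$, (iv) conclude.
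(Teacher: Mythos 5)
Your proposal is correct and, in its concrete form (the fallback ``$n$-fold product trick by hand''), is essentially the paper's own argument: the paper sets $\cG_d$ to be the subgroup of $d$-th power residues, notes that all $I^n$ products $h_1\cdots h_n$ with $h_i\in\{1,\ldots,H\}\cap\cG_d$ land in $\cG_d$, pigeonholes to a single value $g_0$, and bounds the number of representations $h_1\cdots h_n=g_0+px$ by $H^n/p+1$ times a $p^{o(1)}$ divisor-function factor, yielding $I^n\lesssim H^n/d+p/d$. The only difference is that the paper does not invoke a black-box interval--subgroup lemma from \cite{BGKS,CillGar} but carries out exactly this elementary counting, so your bookkeeping $(H^n|G|/p)^{1/n}+(|G|)^{1/n}$ with $|G|=(p-1)/d$ matches the paper's conclusion.
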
 

\begin{proof} Let $\cG_d$ be the subgroup of $d$-th power residues modulo $p$. Then
$$
\#\cG_d = (p-1)/d.
$$
Let  
$$
\cH_d =\{1,2,\ldots H\}  \cap \cG_d.
$$ 
(recall our convention that $\F_p$ is   represented by the elements of the set $\{0, \ldots, p-1\}$). 

For any $h_1,\ldots, h_n \in \cH_d$  we have that
$$
h_1\cdots h_n \pmod p \in \cG_d.
$$
Hence, for some fixed $g_0<p$ with $g_0\pmod p\in\cG_d$ we have that
\begin{equation}
\label{eqn: Hd power n}
I^n = (\#\cH_d)^n \le \#\cG_d \times I_n(g_0) = \frac{p-1}{d} \times I_n(g_0),
\end{equation}
where $I_n(g_0)$ is the number of solutions of the congruence
$$
h_1\cdots h_n \equiv g_0 \pmod p, \quad  h_i\in [1,H],\ i=1,...,n.
$$
It follows that
\begin{equation}
\label{eqn:h..h=g+px}
h_1\ldots h_n = g_0 + px, \quad 0\le x\le H^n/p.
\end{equation}
Since by~\eqref{eqn:tauphi} for a fixed $x$ the equation~\eqref{eqn:h..h=g+px} has at most $p^{o(1)}$ solutions, we get that
$$
I_n(g_0)\lesssim \frac{H^n}{p}+1,
$$
and the result  follows from~\eqref{eqn: Hd power n}.
\end{proof}

Next we obtain a similar result for ratios.  

\begin{lemma}
\label{lem:Rat-Group} 
Let $e\mid p-1$. For any fixed integer $n$ and for any positive integer  $H<p$, the number 
$J$ of pairs of  integers $(h, k)\in  \{1,2,\ldots, H\}^2$, for  which
the ratios $h/k$ are $e$-th power residues modulo $p$ satisfies
$$
J\lesssim H\(\frac{H}{e^{1/n}} + \frac{p^{1/n}}{e^{1/n}}\).
$$
\end{lemma}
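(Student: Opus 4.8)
The plan is to transcribe the argument of Lemma~\ref{lem:Int-Group} to a ``doubled'' setting. Let $\cG_e \subseteq \F_p^*$ be the subgroup of $e$-th power residues modulo $p$, so that $\#\cG_e = (p-1)/e$ and, crucially, $h/k$ is an $e$-th power residue modulo $p$ if and only if $hk^{-1} \bmod p \in \cG_e$. Since $1 \le h,k \le H < p$, the residue $hk^{-1} \bmod p$ is well defined and nonzero, so $J$ counts exactly the pairs $(h,k) \in \{1,2,\ldots,H\}^2$ with $hk^{-1} \bmod p \in \cG_e$.

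Next I apply the power trick. Fix the integer $n$ and consider an $n$-tuple $((h_i,k_i))_{i=1}^n$ of such pairs. Because $\cG_e$ is a subgroup, $\prod_{i=1}^n\bigl(h_ik_i^{-1}\bigr) = \bigl(\prod_i h_i\bigr)\bigl(\prod_i k_i\bigr)^{-1} \bmod p$ again lies in $\cG_e$. Hence
$$
J^n \le \#\Bigl\{ ((h_i,k_i))_{i=1}^n \in \{1,2,\ldots,H\}^{2n} : \Bigl(\prod_i h_i\Bigr)\Bigl(\prod_i k_i\Bigr)^{-1} \bmod p \in \cG_e \Bigr\}.
$$
Writing $T(a)$ for the number of $\vec{h} = (h_1,\ldots,h_n) \in \{1,2,\ldots,H\}^n$ with $\prod_i h_i \equiv a \pmod p$, and grouping the right-hand side according to the residues $a = \prod_i h_i \bmod p$ and $b = \prod_i k_i \bmod p$ (so that the condition reads $a \in b\cG_e$), we obtain
$$
J^n \le \sum_b T(b) \sum_{a \in b\cG_e} T(a).
$$

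The core estimate is then a pointwise bound on $T(a)$, obtained exactly as in Lemma~\ref{lem:Int-Group}: if $\prod_i h_i \equiv a_0 \pmod p$ with $a_0$ the representative in $\{1,\ldots,p-1\}$, then $\prod_i h_i = a_0 + px$ for some integer $0 \le x \le H^n/p$, and for each fixed $x$ the number of ways to write $a_0 + px$ as an ordered product of $n$ integers from $\{1,\ldots,H\}$ is $p^{o(1)}$ by the divisor bound~\eqref{eqn:tauphi} applied to the $n$-fold divisor function. Thus $T(a) \lesssim H^n/p + 1$, and since each coset $b\cG_e$ contains $(p-1)/e$ residues, $\sum_{a \in b\cG_e} T(a) \lesssim H^n/e + p/e$ uniformly in $b$. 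Combining this with $\sum_b T(b) = H^n$ gives
$$
J^n \lesssim \Bigl(\frac{H^n}{e} + \frac{p}{e}\Bigr) H^n = \frac{H^{2n}}{e} + \frac{H^n p}{e},
$$
and extracting $n$-th roots via $(\alpha+\beta)^{1/n} \le \alpha^{1/n}+\beta^{1/n}$ yields $J \lesssim H^2/e^{1/n} + Hp^{1/n}/e^{1/n}$, which is the asserted bound.

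I do not expect a genuine obstacle: the argument is a routine adaptation of Lemma~\ref{lem:Int-Group}. The only point requiring a little care is that here the product condition must be summed over a whole coset $b\cG_e$ of size $(p-1)/e$ (rather than a single residue target, as in the subgroup case), and that the reduction to one product condition genuinely uses that $\cG_e$ is a \emph{subgroup} — this is where ``being an $e$-th power residue'', a subgroup condition on ratios, is essential.
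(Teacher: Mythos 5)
Your proof is correct and takes essentially the same route as the paper's: both raise to the $n$-th power, use that $\cG_e$ is a subgroup to reduce to counting solutions of $h_1\cdots h_n \equiv g_0\, k_1\cdots k_n \pmod p$, and then apply the divisor-function bound $\tau(k)=k^{o(1)}$ to the resulting integer equation. The only cosmetic difference is that you sum the representation counts $T(a)$ over the whole coset $b\cG_e$, whereas the paper fixes a single maximizing $g_0\in\cG_e$ and multiplies by $\#\cG_e=(p-1)/e$ after fixing $(k_1,\ldots,k_n)$ in $H^n$ ways.
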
 

\begin{proof} As in the proof of Lemma~\ref{lem:Int-Group},  we notice  that $J^n$ does not exceed the number of solutions to
$$
\frac{h_1\cdots h_n}{k_1\cdots k_n}\pmod p\in \cG_e, \quad  1\le h_i, k_i\le H, \ i=1,\ldots,n, 
$$
and thus for some $ g_0 \in \cG_e$ we have 
$$
J^n \lesssim \frac{p-1}{d}\times J_n(g_0), 
$$
where $ J_n(g_0)$ is the number of solutions to the congruence
$$
 h_1\cdots h_n\equiv g_0 k_1\cdots k_n \pmod p,  \quad  1\le h_i, k_i\le H, \ i=1,\ldots,n. 
$$

We fix $(k_1,\ldots, k_n)$ in $H^n$ ways and then as  in the proof of Lemma~\ref{lem:Int-Group} 
we derive that
$$
J^n \lesssim \frac{p-1}{e}\times H^n \times \(\frac{H^n}{p}+1\)\lesssim H^n\(\frac{H^{n}}{e} + \frac{p}{e}\), 
$$
and the result follows. 
\end{proof}

\subsection{Proof of Theorem~\ref{thm:Bound VH}}
 Let $g$ be a primitive root modulo $p$. Denote
$$
\cT :=\{\ind_g h:~h\le H\},
$$
that is, 
$$
\{1,2,\ldots, H\}\ = \{g^{t}: ~t\in \cT\}\pmod p.
$$

We have that
\begin{align*}
V_{\va, p} (H)  &= \sum_{t_1\in \cT}\sum_{t_2\in \cT}\left|\sum_{x=1}^{p-1}\ep\(ag^{xt_1} + bg^{xt_2}\)\right|\\
& =
\sum_{t_1\in \cT}\sum_{t_2\in \cT}\left|\sum_{x=1}^{p-1}\ep\(ax^{t_1} + bx^{t_2}\)\right| \\
&= \sum_{d \mid p-1 } \sum_{\substack{t_1, t_2\in \cT\\ \gcd(t_1, t_2,p-1)=d}} \left|\sum_{x=1}^{p-1}\ep\(ax^{t_1} + bx^{t_2}\)\right| .
\end{align*}
Now, for two divisors $d \mid e\mid p-1$, we collect pairs $(t_1, t_2)\in \cT^2$ with the same values of 
$$
 \gcd(t_1, t_2,p-1)=d \mand  \gcd(t_1 - t_2,p-1)=e.
$$
We see from~\eqref{eqn:tauphi}  that for some  $d \mid e\mid p-1$ we have 
\begin{equation}
\label{eqn:V W}
V_{\va, p} (H)   \lesssim W
\end{equation}
where 
$$
W := \sum_{\substack{t_1\in \cT, t_2\in \cT\\ 
\gcd(t_1, t_2, p-1)=d\\
\gcd(t_1-t_2,p-1)=e}}\left|\sum_{x=1}^{p-1}\ep\(ax^{t_1} + bx^{t_2}\)\right|
$$
Combining Lemma~\ref{lem:binom}  and the trivial bound,  we obtain
\begin{equation}
\label{eqn: W BR}
W \ll B R
\end{equation}
with 
\begin{equation}
\label{eqn: Bound B}
B := \min\{B_1, B_2\}, \quad  \text{where} \ B_1:= e + d^{13/46} p^{89/92},   \  B_2 := p, 
\end{equation}
and 
$$
R :=  \sum_{\substack{t_1\in \cT, t_2\in \cT\\ 
\gcd(t_1, t_2, p-1)=d\\
\gcd(t_1-t_2,p-1)=e}} 1. 
$$
Using only the condition $\gcd(t_1, t_2, p-1)=d$ and Lemma~\ref{lem:Int-Group} we obtain 
$$
R  \lesssim  \(\frac{H}{d^{1/n}} + \frac{p^{1/n}}{d^{1/n}}\)^2
$$
while using only the condition $\gcd(t_1- t_2, p-1)=e$ and Lemma~\ref{lem:Rat-Group} we obtain 
$$
R   \lesssim H\(\frac{H}{e^{1/n}} + \frac{p^{1/n}}{e^{1/n}}\).
$$

First we consider the case   $H\ge p^{1/n+o(1)}$.
Under this  assumption  we see that we have 
\begin{equation}
\label{eqn: Bound R}
R \lesssim \min\{R_1, R_2\} \quad  \text{where} \ R_1 = H^{2} d^{-2/n}, \ R_2 =   H^{2} e^{-1/n}  . 
\end{equation}

Now, for $d \ge p^{3/26}$  we see from~\eqref{eqn: W BR}, \eqref{eqn: Bound B} and~\eqref{eqn: Bound R}
\begin{equation}
\label{eqn: Bound 1}
W  \lesssim B_2 R_1 \lesssim p \times  H^{2} d^{-2/n} \le H^{2}  p^{1-3/13n} 
\end{equation}
Similarly, for  $d \le p^{3/26}$,  we  obtain 
\begin{equation}
\label{eqn: Bound 2}
\begin{split} 
W  & \ll B_1 R\\
&   \lesssim  eR_2 + d^{13/46} p^{89/92} R_1 \\
&  \lesssim e \times  H^{2} e^{-1/n} + d^{13/46} p^{89/92} \times  H^{2} d^{-2/n} \\
& =    H^{2} e^{1-1/n} +   p^{89/92} \times  H^{2} d^{13/46-2/n} \\
 &  \lesssim H^{2} p^{1-1/n} + H^2 p^{89/92} + H^{2}   p^{1-3/13n} \\
  & \ll  H^2 p^{89/92} + H^{2}   p^{1-3/13n} .
\end{split} 
\end{equation}
Recalling~\eqref{eqn:V W}, we obtain the desired bound for  $H\ge p^{1/n+o(1)}$.

Now assume that  $H\le p^{1/n+o(1)}$. 
Under this  assumption~\eqref{eqn: Bound R}  becomes 
$$
R \lesssim \min\{\widetilde R_1, \widetilde R_2\} \quad  \text{where} \ 
\widetilde R_1 = p^{2/n} d^{-2/n}, \ 
\widetilde  R_2 =   H p^{1/n} e^{-1/n}  . 
$$
Thus, in this case for  $d \ge p^{3/26}$ we obtain 
$$
W  \lesssim B_2  \widetilde R_1   \lesssim p \times   p^{2/n} d^{-2/n}   \le p^{1+23/13n} 
$$
instead of~\eqref{eqn: Bound 1}, while instead of~\eqref{eqn: Bound 2} we obtain 
\begin{align*} 
W  & \ll B_1 R\\
&   \lesssim  e\widetilde R_2 + d^{13/46} p^{89/92} \widetilde R_1 \\
&  \lesssim e \times  H p^{1/n} e^{-1/n} + d^{13/46} p^{89/92} \times  p^{2/n} d^{-2/n}  \\
& =    H p^{1/n} e^{1-1/n} +      p^{89/92+2/n} d^{13/46-2/n} \\
 &  \lesssim H p  + p^{89/92+2/n} +    p^{1+23/13n} \\
  & \ll  p^{89/92+2/n} +    p^{1+23/13n}, 
\end{align*} 
which concludes the proof.

\section{Treatment of the number of common zeros $N_p(r,s)$}

 \subsection{Background on heights}
 There are many closely related definitions of the height on an algebraic integer. 
 It is convenient for us to use the one as in~\cite[Chapter~14]{Mass}. 
 Namely given an algebraic number $\alpha$, which is a root of an irreducible polynomial 
 $$
 f(X) = a_0 X^d + \ldots +a_{d-1} X + a_d  = a_0 \prod_{i=1}^d \(X-\alpha_i\) \in \Z[X]
 $$
 with $a_0\ge 1$ and with $\gcd(a_0, \ldots, a_d) = 1$, we define its height as 
 $$
 \HH(\alpha) := \(a_0   \prod_{i=1}^d \max\{1,  |\alpha_i|\}\)^{1/d}.
 $$
 We recall the following useful properties of the height 
 \begin{equation}
\label{eq: submult/add}
 \HH(\alpha+\beta)  \le 2  \HH(\alpha) \HH(\beta) 
 \mand
  \HH(\alpha\beta)  \le  \HH(\alpha) \HH(\beta) 
\end{equation}
and
 \begin{equation}
\label{eq: Inv}
 \HH(\alpha^{-1} )  =  \HH(\alpha) 
\end{equation}
see~\cite[Equation~(14.12)]{Mass} and~\cite[Equation~(14.24)]{Mass}, respectively.

 \subsection{Resultants and roots of some polynomials}

 We adopt a convention that  greatest common divisor of two polynomials over $\Z[X]$ is a 
 polynomial with integer co-prime coefficients and a positive leading coefficient, which makes it uniquely defined.
 
Recall  that $F_r(x) = (X+1)^r-x^r-1$, Then  (cf.~\cite{Nan,Tze1, Tze2}) 
\begin{equation}
\label{eq: FrGr}
F_r(X) = X(X+1)^a\(X^2+X+1\)^b G_r(X) , 
\end{equation}
where $\gcd\( G_r(X), X(X+1)\(X^2+X+1\)\) = 1$. Then we have
\begin{itemize}
\item if $r$ is even then  $a=b=0$;
\item if $r$ is odd then $a=1$ and 
$$
b = \begin{cases} 0, & \text{if}\ r \equiv 3 \pmod 6,\\
1, & \text{if}\ r \equiv 5 \pmod 6,\\
2, & \text{if}\ r \equiv 1 \pmod 6,
\end{cases} 
$$
\end{itemize}

We need the following result given by~\cite[Lemma~6]{Nan}. 

 \begin{lemma}
\label{lem:SimpleRoot-PolyF} 
For any integer $r  \ge 1$, the roots of $G_r(X)$, given by~\eqref{eq: FrGr}, are simple. 
\end{lemma}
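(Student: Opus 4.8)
\textbf{Proof proposal for Lemma~\ref{lem:SimpleRoot-PolyF}.}

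The plan is to show that $F_r(X)=(X+1)^r-X^r-1$ has no repeated roots away from the ``obvious'' locus $X(X+1)(X^2+X+1)$, by computing $\gcd(F_r, F_r')$ over $\Q$ (equivalently, analyzing common roots of $F_r$ and its derivative in $\overline{\Q}$). First I would write $F_r'(X) = r\big((X+1)^{r-1}-X^{r-1}\big)$. Suppose $\alpha\in\overline{\Q}$ is a common root of $F_r$ and $F_r'$; then $(\alpha+1)^{r-1}=\alpha^{r-1}$ and $(\alpha+1)^r = \alpha^r+1$. Multiplying the first relation by $\alpha+1$ gives $(\alpha+1)^r=\alpha^{r-1}(\alpha+1)$, and comparing with the second yields
\begin{equation}
\label{eq:keyrel}
\alpha^{r-1}(\alpha+1) = \alpha^r + 1, \qquad\text{i.e.}\qquad \alpha^{r-1} = 1.
\end{equation}
So any multiple root $\alpha$ is a root of unity (and $\alpha\ne 0$; also from $(\alpha+1)^{r-1}=\alpha^{r-1}=1$ we get that $\alpha+1$ is also a root of unity).

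Next I would invoke the classical fact that the only pairs $(\zeta_1,\zeta_2)$ of roots of unity with $\zeta_1-\zeta_2=1$ (here $\zeta_1=\alpha+1$, $\zeta_2=\alpha$, well, rather $\zeta_1 = \alpha+1$ and we need $\alpha$ a root of unity) are the sixth roots of unity: from $|\alpha|=|\alpha+1|=1$ one gets $\alpha = e^{\pm 2\pi i/3}$, i.e. $\alpha^2+\alpha+1=0$. Together with the excluded root $\alpha=0$ (which occurs for $r$ odd) and $\alpha=-1$ (likewise), this shows every repeated root of $F_r$ lies in the zero set of $X(X+1)(X^2+X+1)$, hence is not a root of $G_r(X)$ by the coprimality in~\eqref{eq: FrGr}. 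One still has to check that $G_r$ genuinely has simple roots, i.e. that the factorization~\eqref{eq: FrGr} strips off exactly the multiplicity that $X$, $X+1$, $X^2+X+1$ carry in $F_r$; this is a local computation of the order of vanishing of $F_r$ at $0$, $-1$, and a primitive cube root of unity, matching the case analysis for $a,b$ recorded after~\eqref{eq: FrGr}. Alternatively, and more cleanly, I would just cite~\cite[Lemma~6]{Nan} as the excerpt already does, since the statement is quoted verbatim from there.

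The main obstacle is the rigorous justification of the ``$\zeta_1-\zeta_2=1$ forces sixth roots of unity'' step if one wants a self-contained argument: one must rule out that some $\alpha$ with $\alpha^{r-1}=1$ but $\alpha^2+\alpha+1\ne 0$ could still satisfy $F_r(\alpha)=0$. This is handled by the norm/absolute-value argument above ($|\alpha|=1$ and $|\alpha+1|=1$ pin down $\alpha$ to two points), which is elementary but is the one place where a careless proof could go wrong. A secondary, purely bookkeeping obstacle is matching the exact exponents $a$ and $b$ in~\eqref{eq: FrGr} with the residue of $r$ modulo $6$; since Lemma~\ref{lem:SimpleRoot-PolyF} only asserts simplicity of the roots of $G_r$ and not the precise multiplicities, this bookkeeping is not strictly needed for the lemma and I would relegate it to the discussion already present around~\eqref{eq: FrGr}. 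Given that the paper explicitly attributes the statement to Nanninga, the cleanest route in the write-up is the citation, with the derivation~\eqref{eq:keyrel} included as a short remark so the reader sees why the roots of unity enter.
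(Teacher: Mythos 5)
Your argument is essentially correct, but note that the paper itself contains no proof of this lemma: it is quoted verbatim from Nanninga~\cite[Lemma~6]{Nan}, so the only ``official'' route in the text is the citation. Your self-contained derivative computation is therefore a genuinely different route, and it works: from $F_r(\alpha)=F_r'(\alpha)=0$ you correctly extract $(\alpha+1)^{r-1}=\alpha^{r-1}$ and then $\alpha^{r-1}=1=(\alpha+1)^{r-1}$, so $|\alpha|=|\alpha+1|=1$, which pins any multiple root of $F_r$ down to the two roots of $X^2+X+1$ (and automatically excludes $0$ and $-1$). Since a root of $G_r$ is coprime to $X(X+1)(X^2+X+1)$ by~\eqref{eq: FrGr}, its multiplicity in $G_r$ equals its multiplicity in $F_r$, so simplicity follows at once; your instinct that the bookkeeping of the exact exponents $a,b$ is dispensable for this lemma is right. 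Two of your stated ``obstacles'' are not real ones: the concern about an $\alpha$ with $\alpha^{r-1}=1$ but $\alpha^2+\alpha+1\ne 0$ does not arise, because your derivation yields \emph{both} $\alpha^{r-1}=1$ and $(\alpha+1)^{r-1}=1$, and the two modulus conditions together already force $\alpha^2+\alpha+1=0$; and the classification of pairs of roots of unity differing by $1$ reduces to exactly this elementary circle intersection, so nothing deeper is needed. The only genuine caveat is the degenerate case $r=1$, where $F_1\equiv 0$ and the factorization~\eqref{eq: FrGr} is vacuous; for $r\ge 2$ your argument is complete. In short: the citation buys brevity, while your argument buys a transparent, elementary proof that also explains why precisely the factors $X$, $X+1$ and $X^2+X+1$ can absorb multiplicity in $F_r$.
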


 We start with recalling the following result of Beukers~\cite[Theorem~4.1]{Beu} on 
 polynomials~\eqref{eq:poly F}. 
 
 \begin{lemma}
\label{lem:GCD-PolyF} 
For any integers $r > s >1$, 
\begin{align*} 
\gcd\(F_r(X) , F_s(X)\)\in \{X, X(X+1),  X&(X+1)(X^2+X+1), \\
&X(X+1)(X^2+X+1)^2\}. 
\end{align*} 
\end{lemma}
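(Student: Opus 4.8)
The plan is to split the gcd computation into a \emph{special} part and a \emph{generic} part. Using the factorization~\eqref{eq: FrGr}, write
\begin{align*}
F_r(X) &= X(X+1)^{a_r}(X^2+X+1)^{b_r}\, G_r(X), \\
F_s(X) &= X(X+1)^{a_s}(X^2+X+1)^{b_s}\, G_s(X),
\end{align*}
where $a_\ell,b_\ell$ are read off from $\ell$ modulo $6$ and each $G_\ell$ is coprime to $X(X+1)(X^2+X+1)$. Because these three irreducibles are pairwise distinct and coprime to the $G_\ell$, the gcd factors multiplicatively, with exponents given by minima:
$$
\gcd(F_r,F_s) = X\,(X+1)^{\min(a_r,a_s)}\,(X^2+X+1)^{\min(b_r,b_s)}\,\gcd\bigl(G_r(X),G_s(X)\bigr).
$$
Thus the whole statement reduces to (i) evaluating the minima of the special exponents, and (ii) proving $\gcd(G_r,G_s)=1$.

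Step (i) is pure bookkeeping. The factor $X$ is always present to first power, since $F_\ell(0)=0$ while $F_\ell'(0)=\ell\neq 0$ for $\ell\ge 2$. One has $a_\ell=1$ exactly when $\ell$ is odd (directly, $F_\ell(-1)=0$ iff $\ell$ is odd) and $a_\ell=0$ otherwise, while $b_\ell>0$ forces $\ell$ odd. Feeding the three values of $b_\ell$ from~\eqref{eq: FrGr} into the minimum, and noting that $\min(b_r,b_s)\ge 1$ forces both $r,s$ odd and hence $\min(a_r,a_s)=1$, one checks that the only possible outcomes are
$$
X,\qquad X(X+1),\qquad X(X+1)(X^2+X+1),\qquad X(X+1)(X^2+X+1)^2,
$$
according as some $\ell$ is even (giving $X$), or both are odd with $\min(b_r,b_s)$ equal to $0$, $1$, or $2$. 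This already produces exactly the four-element set in the statement.

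The crux is step (ii): that $F_r$ and $F_s$ share no common root outside $\{0,-1,\omega,\omega^2\}$, with $\omega$ a primitive cube root of unity. Conceptually, for odd $\ell$ the polynomial $F_\ell$ satisfies the functional equations $X^\ell F_\ell(1/X)=F_\ell(X)$ and $F_\ell(-1-X)=F_\ell(X)$, so its roots are stable under the group of order $6$ (isomorphic to $S_3$) generated by $X\mapsto 1/X$ and $X\mapsto -1-X$. The small orbits relevant here are $\{0,-1,\infty\}$ and $\{\omega,\omega^2\}$, which account precisely for the special factors $X(X+1)$ and $X^2+X+1$, whereas the roots of $G_\ell$ fill out generic orbits of size $6$. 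So (ii) says $F_r$ and $F_s$ have no common generic orbit, and this is the one step that is not formal: it is exactly the content of Beukers' Theorem~4.1 in~\cite{Beu}, and it is the main obstacle. A route consistent with the height preliminaries~\eqref{eq: submult/add}--\eqref{eq: Inv} would be to show that any common root $\alpha$, equivalently a point with $(\alpha+1)^r=\alpha^r+1$ and $(\alpha+1)^s=\alpha^s+1$, is forced to have bounded height $\HH(\alpha)$ and then to rule out the finitely many survivors; but the clean and complete elimination is Beukers', which I would invoke rather than reprove. Lemma~\ref{lem:SimpleRoot-PolyF} fits this picture, confirming that the $G_\ell$ contribute no repeated factors, so the only square occurring in $\gcd(F_r,F_s)$ is the $(X^2+X+1)^2$ coming from the special part.
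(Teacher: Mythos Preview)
Your proposal is correct and, at its core, matches the paper: the paper does not prove Lemma~\ref{lem:GCD-PolyF} at all but simply records it as Beukers' result~\cite[Theorem~4.1]{Beu}, and you likewise invoke Beukers for the essential step~(ii). What you add is the explicit bookkeeping showing that, once $\gcd(G_r,G_s)=1$ is granted, the special factors $X$, $X+1$, $X^2+X+1$ combine in exactly the four ways listed; this is a useful unpacking of why the statement has the shape it does, but it is not an alternative proof, since the only nontrivial content remains Beukers' theorem.
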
  

Next we recall a result of  Beukers~\cite[Lemma~3.5]{Beu} 
(see also~\cite[Theorem~17.1]{Mass}) 
about the height 
of roots of polynomials $F_r(X)$ 

 \begin{lemma}
\label{lem:SmallRoot-PolyF} 
For any integer $s \ge 1$, every root $\alpha$ of $F_s(X)$ is of height at most 
$\HH(\alpha) \le 216$. 
\end{lemma}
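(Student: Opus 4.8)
The plan is to bound the Weil height of each root by first discarding the roots coming from the trivial factors in the factorisation~\eqref{eq: FrGr}, and then treating a root of $G_s(X)$ by reducing its defining relation to a Fermat-type identity to which the hypergeometric approximation method applies.

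First I would dispose of the trivial roots. By~\eqref{eq: FrGr} every root of $F_s(X)$ is either a root of $G_s(X)$ or lies in $\{0,-1\}\cup\{\omega:\omega^2+\omega+1=0\}$; the latter are $0$ or roots of unity, so have height at most $1\le 216$. Thus it suffices to bound $\HH(\alpha)$ for a root $\alpha$ of $G_s(X)$, in particular $\alpha\neq 0,-1$. For such $\alpha$ set $\eta:=\alpha/(\alpha+1)$, so that $1-\eta=1/(\alpha+1)$ and the relation $(\alpha+1)^s=\alpha^s+1$ becomes the symmetric identity $\eta^s+(1-\eta)^s=1$. Using the height inequalities~\eqref{eq: submult/add} and~\eqref{eq: Inv} together with $\HH(1)=\HH(-1)=1$, one gets $\HH(1-\eta)\le 2\HH(\eta)$ and hence, from $\alpha=\eta(1-\eta)^{-1}$, the bound $\HH(\alpha)\le 2\HH(\eta)^2$. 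It therefore suffices to bound $\HH(\eta)$ by an absolute constant, and tracking that constant will yield $216$.

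Geometrically, $(\eta,1-\eta)$ is a point on the line $x+y=1$ lying on the Fermat curve $x^s+y^s=1$. Writing $\eta^s=1-(1-\eta)^s$ and noting that the nontrivial solutions cluster near the points $(\zeta,1-\zeta)$ with $\zeta^s=1$, the task is to show this clustering is never too tight, uniformly in $s$. Here lies the main difficulty, and the reason the bound is delicate: a root $\alpha$ of $F_s$ can have absolute value as large as $\asymp s$ (the roots accumulate along the line $\mathrm{Re}\,\alpha=-1/2$), so a purely archimedean estimate of $\log^+|\alpha|_v$ at a single place is hopeless. The height stays bounded only because such large roots are forced to generate fields of large degree, which is an arithmetic phenomenon and not an analytic one.

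The engine I would use to capture this is the hypergeometric (Pad\'e) approximation method of Beukers and Schlickewei. Concretely, one constructs explicit polynomial approximants $A_n(z),B_n(z)$ to the binomial function attached to $(1-z)^{1/s}$ (equivalently, good rational approximations to the relevant $s$-th roots), with the remainder $A_n(z)-(1-z)^{1/s}B_n(z)$ vanishing to high order at $z=0$ and of controlled archimedean and $\ell$-adic size. Evaluating the resulting identity at the algebraic point attached to $\eta$ produces a nonzero algebraic number---nonvanishing being guaranteed by the simplicity of the roots of $G_s$, Lemma~\ref{lem:SimpleRoot-PolyF}---whose height is forced to be small; the product formula, in the form of a Liouville-type inequality, then converts the approximation quality into an absolute upper bound for $\HH(\eta)$. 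I expect the hard part to be exactly this quantitative input: making the approximation quality uniform in $s$ and sharp enough to defeat the large binomial coefficients $\binom{s}{k}$, and then bookkeeping the constants carefully enough to reach the explicit value $216$ rather than an unspecified absolute constant. For fixed degree one could instead invoke Baker-type lower bounds for linear forms in logarithms, but these do not deliver the uniformity in $s$ that the statement of Lemma~\ref{lem:SmallRoot-PolyF} demands.
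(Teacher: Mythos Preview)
The paper does not prove this lemma at all: it is simply quoted from Beukers~\cite[Lemma~3.5]{Beu} (with a pointer also to~\cite[Theorem~17.1]{Mass}). So there is no ``paper's own proof'' to compare against; you are attempting to reconstruct Beukers' argument.

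Your outline is in the right spirit and indeed tracks Beukers' method: the reduction to $\eta^s+(1-\eta)^s=1$ via $\eta=\alpha/(\alpha+1)$ is exactly the move Beukers makes, and the key input is a hypergeometric Pad\'e approximation to binomial functions, combined with the product formula, to force $\HH(\eta)$ to be absolutely bounded. The height comparison $\HH(\alpha)\le 2\HH(\eta)^2$ is correct.

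That said, what you have written is a plan, not a proof. The entire content of the lemma lives in the step you flag as ``the hard part'': constructing the explicit Pad\'e approximants, controlling their heights uniformly in $s$, verifying nonvanishing of the auxiliary algebraic number, and tracking constants to reach $216$. None of this is carried out, and your invocation of Lemma~\ref{lem:SimpleRoot-PolyF} for nonvanishing is not quite the right mechanism (simplicity of roots of $G_s$ is a statement about $G_s$, not about the auxiliary Pad\'e remainder evaluated at $\eta$; in Beukers' argument nonvanishing comes from a determinant/Wronskian computation for the approximants themselves). If you intend to include a self-contained proof, you would need to reproduce the hypergeometric construction in full; otherwise, as the paper does, cite~\cite{Beu}.
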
  

We now recall the following result of
G\'omez-P\'erez,  Gutierrez, Ibeas and  Sevilla~\cite{GGIS}.
In fact, in the special case of square-free factors it has also been given 
in~\cite[Lemma~5.3]{KonShp} (which is also enough for our purpose).
 
 \begin{lemma} 
\label{lem:ZeroRes} 
Let $p$ be a prime number. Assume that the reductions modulo $p$ 
of two polynomials $\Psi, \Phi \in \Z[X]$ have a common factor of 
degree $m$. Then the resultant $\Res(\Psi, \Phi)$ of $\Psi$ and $\Phi$ satisfies
$$
\Res(\Psi, \Phi) \equiv 0 \pmod {p^m}.
$$
\end{lemma}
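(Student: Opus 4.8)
The plan is to identify $\Res(\Psi,\Phi)$ with the determinant of the Sylvester matrix and to bound the $p$-adic valuation of that determinant by the $\F_p$-corank of its reduction. Write $d=\deg\Psi$ and $e=\deg\Phi$ (the degrees over $\Z$), and let $S$ be the associated $(d+e)\times(d+e)$ Sylvester matrix, whose entries are the integer coefficients of $\Psi$ and $\Phi$, so that $\Res(\Psi,\Phi)=\det S$. Since the entries of $S$ are literally these coefficients, reducing $S$ modulo $p$ yields the Sylvester matrix $\bar S$ of the reductions $\bar\Psi,\bar\Phi\in\F_p[X]$ formed with respect to the same formal degrees $d,e$. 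Thus it suffices to prove that $v_p(\det S)\ge m$, where $v_p$ denotes the $p$-adic valuation (the assertion being trivial when $\det S=0$).

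First I would convert the valuation of the determinant into a rank statement. Putting $S$ into Smith normal form over the PID $\Z$, write $S=UDV$ with $U,V\in\GL_{d+e}(\Z)$ and $D=\mathrm{diag}(s_1,\dots,s_{d+e})$, so that $\det S=\pm\prod_i s_i$ and hence $v_p(\det S)=\sum_i v_p(s_i)$. Reducing modulo $p$ gives $\bar S=\bar U\bar D\bar V$ with $\bar U,\bar V$ invertible over $\F_p$, whence $\mathrm{rank}_{\F_p}\bar S=\#\{i:p\nmid s_i\}$ and therefore the corank satisfies $\dim_{\F_p}\ker\bar S=\#\{i:p\mid s_i\}$. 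Consequently $v_p(\det S)\ge\dim_{\F_p}\ker\bar S$, and the whole statement reduces to showing that the kernel of $\bar S$ over $\F_p$ has dimension at least $m$.

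To produce $m$ independent kernel vectors I would use the Sylvester interpretation of $\bar S$ as the linear map $(U,V)\mapsto U\bar\Psi+V\bar\Phi$ on pairs with $\deg U<e$, $\deg V<d$. By hypothesis the common factor of degree $m$ divides $\bar h:=\gcd(\bar\Psi,\bar\Phi)$, so $k:=\deg\bar h\ge m$; write $\bar\Psi=\bar h\bar\Psi_1$ and $\bar\Phi=\bar h\bar\Phi_1$. For any $W\in\F_p[X]$ the pair $(W\bar\Phi_1,-W\bar\Psi_1)$ lies in the kernel, since $W\bar\Phi_1\bar\Psi-W\bar\Psi_1\bar\Phi=0$. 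When $\deg W<k$ the degree constraints hold — indeed $\deg(W\bar\Phi_1)\le(k-1)+(\deg\bar\Phi-k)<\deg\bar\Phi\le e$ and likewise $\deg(W\bar\Psi_1)<\deg\bar\Psi\le d$ — and the assignment $W\mapsto(W\bar\Phi_1,-W\bar\Psi_1)$ is injective because $\bar\Phi_1\neq0$ (as $\bar\Phi\neq0$). This exhibits a $k$-dimensional subspace of $\ker\bar S$, so $\dim_{\F_p}\ker\bar S\ge k\ge m$, which finishes the proof.

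The one point that needs care, and the only place where the hypotheses could misbehave, is the possibility of a degree drop: if $p$ divides the leading coefficient of $\Psi$ or $\Phi$ then $\deg\bar\Psi<d$ or $\deg\bar\Phi<e$, so the actual degrees over $\F_p$ differ from the formal degrees $d,e$ used to build $S$. I expect this to be the main (if mild) obstacle, but it turns out to be harmless: the inequalities $\deg\bar\Phi\le e$ and $\deg\bar\Psi\le d$ used in the degree check above only make the constraints $\deg U<e$, $\deg V<d$ easier to satisfy, so the $k$-dimensional family of kernel vectors survives regardless of any degree drop.
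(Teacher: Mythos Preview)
Your proof is correct. The paper does not actually prove this lemma: it merely recalls it as a known result of G\'omez-P\'erez, Gutierrez, Ibeas and Sevilla~\cite{GGIS} (with the square-free special case also in~\cite[Lemma~5.3]{KonShp}), so there is no in-paper argument to compare against. Your Smith-normal-form bound $v_p(\det S)\ge \dim_{\F_p}\ker\bar S$ together with the standard Sylvester-map description of $\ker\bar S$ gives a clean self-contained proof of the cited result; the degree-drop issue you flag is indeed harmless for the reasons you give. The only cosmetic gap is the degenerate case $\bar\Phi=0$: then your injectivity argument via $\bar\Phi_1$ fails, but one can use $\bar\Psi_1$ instead (or, if both reductions vanish, observe directly that $p^{d+e}\mid\det S$).
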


\subsection{Proof of Theorem~\ref{thm:Bound Nrs}}
Let $R_{r,s}$ be the following resultant
$$
R_{r,s} = \Res\(\Phi_r(X) , \Phi_s(X) \), 
$$
where 
$$
\Phi_r(X) := \frac{F_r(X)}{D_{r,s}(X) } \mand \Phi_s(X) :=   \frac{F_s(X)}{D_{r,s}(X) },
$$
with 
$$
D_{r,s}(X) :=  \gcd\(F_r(X) , F_s(X)\) .
$$

We see from Lemmas~\ref{lem:GCD-PolyF} and~\ref{lem:ZeroRes} that 
\begin{equation}
\label{eq: Div}
p^{N_p(r,s)-4} \mid R_{r,s}.
\end{equation}

We also see from the definition of $R_{r,s}$ that $R_{r,s}\ne 0$. Thus, to estimate $N_p(r,s)-4$ 
from~\eqref{eq: Div} we now need to obtain an upper bound on $R_{r,s}$. 

Since the leading coefficient of $F_r(X)$ is $r$, then so is of $\Phi_r(X)$. Hence 
\begin{equation}
\label{eq: Res}
|R_{r,s}| = r^{\deg \Phi_s} \prod_{\alpha:~\Phi_r(\alpha)=0}   \Phi_s(\alpha) .
\end{equation}

From Lemmas~\ref{lem:GCD-PolyF}  and~\ref{lem:SimpleRoot-PolyF} we conclude that 
\begin{equation}
\label{eq: GCD rs}
D_{r,s}(X)  \mid X(X+1)(X^2+X+1)^2. 
\end{equation}

We now see first from~\eqref{eq: Inv} and then from~\eqref{eq: GCD rs} and finally from~\eqref{eq: submult/add} that 
$$
\HH\(D_{r,s}(\alpha)^{-1}\) = \HH\(D_{r,s}(\alpha)\)  \le  \(\HH(\alpha)+2\)^{O(1)}.
$$
Hence,  by Lemma~\ref{lem:SmallRoot-PolyF}  we conclude that if $\alpha$ is a root of $F_r(\alpha)$ then 
$$
\HH\(D_{r,s}(\alpha)^{-1}\) \ll 1
$$
and thus by~\eqref{eq: submult/add} 
\begin{align*}
\HH\( \Phi_s(\alpha) \) & = \HH\(F_s(\alpha)D_{r,s}(\alpha)^{-1}\) \ll  \HH\(F_s(\alpha)\)\\
& \ll  \HH\(\alpha+1)^s\) +  \HH\(\alpha^s\) +1\\
& \le  \HH\(\alpha+1\)^s +  \HH\(\alpha\)^s +1  \ll \( \HH\(\alpha\) +2\)^s . 
\end{align*}
Using Lemma~\ref{lem:SmallRoot-PolyF} again, we derive 
$$
\HH\( \Phi_s(\alpha) \)  \le  \exp\(O(s)\), 
$$
which after substituting in~\eqref{eq: Res}  yields
\begin{equation}
\label{eq: Rrs-bound}
|R_{r,s}|  \le \exp\(O(rs)\).
\end{equation}
Using this bound together with~\eqref{eq: Div}, we conclude the proof.  

\subsection{Proof of Theorem~\ref{thm:Bound Nrs-AA}}
We see from~\eqref{eq: Div}, that if $p \nmid R_{r,s}$ then 
$N_p(r,s)\le 4$.  

As is well known, for a  nonzero integer $R$, the number $\omega(R)$ of distinct prime divisors   is bounded by 
$$
\omega(R) \ll \frac{\log |R|}{\log \log \(|R|+2\)}.
$$
Using the bound~\eqref{eq: Rrs-bound}, we conclude the proof.

\end{document}